\title{Median eigenvalues of bipartite subcubic graphs}
\author{Bojan Mohar\thanks{Supported in part by the
  Research Grant P1--0297 of ARRS (Slovenia), by an NSERC Discovery Grant (Canada)
  and by the Canada Research Chair program.}~\thanks{On leave from:
  IMFM \& FMF, Department of Mathematics, University of Ljubljana, Ljubljana,
  Slovenia.}\\
  Department of Mathematics, Simon Fraser University,\\
  Burnaby, B.C. V5A 1S6 \\
  email: {\tt mohar@sfu.ca}
}
\date{}
\newtheorem{theorem}{Theorem}[section]
\newtheorem{lemma}[theorem]{Lemma}
\newtheorem{corollary}[theorem]{Corollary}
\newtheorem{proposition}[theorem]{Proposition}
\newtheorem{conjecture}[theorem]{Conjecture}
\newcommand{\DEF}[1]{{\em #1\/}}
\renewcommand\l{\lambda}
\newcommand\ball[1]{B_{#1}(v_0)}
\newcommand\ve{\varepsilon}
\begin{document}

\maketitle

\begin{abstract}
It is proved that the median eigenvalues of every connected bipartite graph $G$ of maximum degree at most three belong to the interval $[-1,1]$ with a single exception of the Heawood graph, whose median eigenvalues are $\pm\sqrt{2}$. Moreover, if $G$ is not isomorphic to the Heawood graph, then a positive fraction of its median eigenvalues lie in the interval $[-1,1]$.
This surprising result has been motivated by the problem about HOMO-LUMO separation that arises in mathematical chemistry.
\end{abstract}

2010 Mathematics Subject Classification: 05C50

\section{Introduction}

In a recent work, Fowler and Pisanski \cite{FP1,FP2} introduced the notion of the \emph{HL-index} of a graph that is related to the HOMO-LUMO separation studied in theoretical chemistry (see also Jakli\v c, Fowler, and Pisanski \cite{FJP}). This is the gap between the Highest Occupied Molecular Orbital (HOMO) and Lowest Unoccupied Molecular Orbital (LUMO). In the H\"uckel model \cite{GuPol}, the energies of these orbitals are in linear relationship with eigenvalues of the corresponding molecular graph and can be expressed as follows. Let $G$ be a graph of order $n$, and let
$\lambda_1(G)\ge \lambda_2(G)\ge \cdots \ge\lambda_n(G)$ be the eigenvalues of its adjacency matrix. The eigenvalues occurring in the HOMO-LUMO separation are the median eigenvalues $\lambda_h(G)$ and $\lambda_\ell(G)$, where
$$
   h=\lfloor \tfrac{n+1}{2}\rfloor \qquad \textrm{and} \qquad
   \ell=\lceil \tfrac{n+1}{2}\rceil.
$$
The \emph{HL-index\/} $R(G)$ of the graph $G$ is then defined as
$$R(G) = \max \{|\lambda_h(G)|,|\lambda_\ell(G)|\}.$$

A simple unweighted graph $G$ is said to be \DEF{subcubic} if its maximum degree is at most $3$. In \cite{FP1,FP2} it is proved that every subcubic graph $G$ satisfies $0\le R(G) \le 3$ and that if $G$ is bipartite, then $R(G)\le \sqrt{3}$. The following is the main result from \cite{Moh}.

\begin{theorem}[Mohar \cite{Moh}]
\label{thm:cubic}
The median eigenvalues $\l_h(G)$ and $\l_\ell(G)$ of every subcubic graph\/ $G$ are contained in the interval $[-\sqrt{2},\sqrt{2}\,]$, i.e., $R(G) \le \sqrt{2}$.
\end{theorem}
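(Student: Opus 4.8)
The plan is to reduce Theorem~\ref{thm:cubic} to a purely combinatorial decomposition statement and then feed the latter into Cauchy interlacing.

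First I would dispose of disconnectedness. If $G=G_1\cup\dots\cup G_k$ and each component $G_i$ (on $n_i$ vertices) satisfies $R(G_i)\le\sqrt2$, then $G_i$ has at most $\lfloor(n_i-1)/2\rfloor$ eigenvalues exceeding $\sqrt2$, and likewise at most $\lfloor(n_i-1)/2\rfloor$ below $-\sqrt2$; since $\sum_i\lfloor(n_i-1)/2\rfloor\le\lfloor(n-1)/2\rfloor=h-1=n-\ell$ whenever $k\ge 1$ (an elementary parity estimate), $G$ has at most $h-1$ eigenvalues above $\sqrt2$ and at most $n-\ell$ below $-\sqrt2$, so $\lambda_h(G)\le\sqrt2$ and $\lambda_\ell(G)\ge-\sqrt2$. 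Hence it suffices to treat connected $G$. The spectral input is the following observation: by Smith's classification of connected graphs of spectral radius at most $2$ — or directly, since any connected graph other than $P_1,P_2,P_3$ contains $P_4$ or $K_3$ as a subgraph, and $\lambda_1(P_4)=\tfrac{1+\sqrt5}{2}>\sqrt2$ while $\lambda_1(K_3)=2$ — the only connected graphs all of whose eigenvalues lie in $[-\sqrt2,\sqrt2]$ are $P_1$, $P_2$ and $P_3$. Consequently, if $S\subseteq V(G)$ is chosen so that every component of $G-S$ is one of these three paths, then all eigenvalues of $G-S$ lie in $[-\sqrt2,\sqrt2]$, and Cauchy interlacing yields
$$\lambda_h(G)\le\lambda_{h-|S|}(G-S)\le\sqrt2\qquad\text{and}\qquad \lambda_\ell(G)\ge\lambda_\ell(G-S)\ge-\sqrt2,$$
provided only that $|S|\le h-1=\lfloor(n-1)/2\rfloor$ (for the second inequality one uses $\ell\le n-|S|$, which is the same bound since $n-\ell=h-1$). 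Thus the theorem would follow from:

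\smallskip
\noindent\emph{Claim.} Every connected subcubic graph $G$ on $n$ vertices, with finitely many small exceptions, has a set $S$ of at most $\lfloor(n-1)/2\rfloor$ vertices such that every component of $G-S$ has at most three vertices; equivalently, $G$ contains an induced disjoint union of copies of $P_1,P_2,P_3$ spanning at least $\lceil(n+1)/2\rceil$ of its vertices.
\smallskip

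I would prove the Claim by induction on $n$, in each step peeling off a controlled local configuration — a vertex of degree at most $1$, a pendant $P_2$ or $P_3$, two adjacent degree-$2$ vertices, or a short cycle — deciding for its one or two boundary vertices whether they join $S$ or the retained short-path forest, deleting a bounded number of vertices, and invoking the inductive hypothesis while verifying that the budget $|S|\le\lfloor(n-1)/2\rfloor$ is preserved. It is probably cleanest to prove a slightly more flexible statement in which $G-S$ is allowed to be an arbitrary disjoint union of paths and cycles, charging the budget one unit per eigenvalue of $G-S$ exceeding $\sqrt2$ (and symmetrically below $-\sqrt2$); since a path or cycle on $m$ vertices has roughly $m/4$ eigenvalues above $\sqrt2$, keeping a long path costs about the same as chopping it into pieces of order at most $3$, so nothing is lost and one gains room in the induction. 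The finitely many exceptional graphs — $K_4$ and $K_{3,3}$ (which contain no sufficiently large induced short-path forest because they are small and clique- or $C_4$-heavy), the triangular prism, and a short further list — are handled by computing their spectra directly and observing $R(G)\le\sqrt2$ in each case.

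The main obstacle is the Claim. Because the constant $\sqrt2$ is tight — witnessed by the Heawood graph, whose median eigenvalues are $\pm\sqrt2$ — the bound $|S|\le\lfloor(n-1)/2\rfloor$ cannot be relaxed, so the induction must be carried out with essentially no slack; this forces a careful and somewhat lengthy case analysis of the local configurations and a precise identification of the exceptional small graphs. Everything else — the reduction to connected graphs, the interlacing argument, and the spectral characterization of $\{P_1,P_2,P_3\}$ — is routine.
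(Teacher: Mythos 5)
First, note that Theorem~\ref{thm:cubic} is quoted here from \cite{Moh}; the present paper contains no proof of it, so there is no in-paper argument to compare yours against. Your general framework -- reduce to connected graphs, observe that $P_1,P_2,P_3$ are the only connected graphs with spectrum in $[-\sqrt{2},\sqrt{2}]$, and feed a suitable vertex-deletion set $S$ into interlacing -- is sound and is essentially the threshold-$\sqrt{2}$ analogue of the machinery this paper builds in Section~2 with threshold $1$ (Lemmas \ref{lem:k-unbalanced}--\ref{lem:thick unbalanced}). One small slip: a connected graph other than $P_1,P_2,P_3$ need not contain $P_4$ or $K_3$; it may instead contain $K_{1,3}$ (e.g.\ the star itself), so you must add $K_{1,3}$, with $\lambda_1(K_{1,3})=\sqrt{3}>\sqrt{2}$, to your list of forbidden subgraphs. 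That is harmless; the characterization you need is still correct.

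The genuine gap is that your Claim -- every connected subcubic graph, up to finitely many exceptions, has an induced subgraph on at least $\lceil\tfrac{n+1}{2}\rceil$ vertices all of whose components are $P_1$, $P_2$ or $P_3$ -- is the entire content of the theorem, and you do not prove it; you only sketch an induction ``peeling off local configurations'' and concede it requires a lengthy case analysis with no slack in the budget. The difficulty is not hypothetical. Beyond $K_4$, $K_{3,3}$ and the prism, the $3$-cube $Q_3$ is a further exception: every induced matching of size $2$ in $Q_3$ consists of an antipodal pair of edges whose closed neighbourhood is all of $V(Q_3)$, and from this one checks that no $5$ of its $8$ vertices induce a disjoint union of short paths. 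The Wagner graph (the M\"obius ladder on $8$ vertices) fails similarly, since it has no induced matching of size $2$ at all and independence number $3$. So your ``short further list'' is already incomplete, it is not obvious a priori that the list is finite, and identifying it is inseparable from carrying out the induction. For calibration: the analogous decomposition task in the easier bipartite, threshold-$1$ setting occupies all of Section~\ref{sect:improving imbalance} of this paper (Claims 1--14 of Lemma~\ref{lem:improve imbalance}) plus a computer-assisted appendix of extremal subgraphs. As it stands, your proposal is a correct reduction plus a plan, not a proof.
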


This result is best possible since the Heawood graph (the bipartite incidence graph of points and lines of the Fano plane) has $\l_h=-\l_\ell=\sqrt{2}$.

The following conjecture was proposed in \cite{Moh}.

\begin{conjecture}
\label{conj:planar}
If\/ $G$ is a planar subcubic graph, then $R(G)\le1$.
\end{conjecture}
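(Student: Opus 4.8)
\section*{A proposed line of attack for Conjecture~\ref{conj:planar}}

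For a real symmetric matrix $M$, let $n_+(M)$ be the number of its positive eigenvalues. Writing $A=A(G)$ and $h-1=\lfloor\tfrac{n-1}{2}\rfloor$, a short computation with the ordering $\l_1\ge\cdots\ge\l_n$ gives
\[
 R(G)\le 1 \iff n_+(A-I)\le h-1 \ \text{ and }\ n_+(-A-I)\le h-1 ;
\]
the two conditions express $\l_h\le1$ and $\l_\ell\ge-1$ respectively, and coincide when $G$ is bipartite. The engine of the argument is Weyl's subadditivity of inertia: if $M=X+Y$, then $n_+(M)\le n_+(X)+n_+(Y)$. So it is enough to write $A=X+Y$ with all eigenvalues of $X$ in $[-1,1]$ and with $Y$ having few positive and few negative eigenvalues: then $n_+(A-I)\le n_+(X-I)+n_+(Y)=n_+(Y)$, and likewise $n_+(-A-I)\le n_-(Y)$.

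The plan is to obtain such a splitting from a well-chosen $2$-factor. One first reduces to the case that $G$ is $2$-edge-connected and cubic: vertices of degree at most $2$, bridges and small edge-cuts are removed by standard reductions together with Cauchy interlacing, arranged so that an upper bound $R\le1$ for the reduced graph transfers back to $G$. For a $2$-edge-connected planar cubic graph $G$, the Four Colour Theorem (via Tait's equivalence) provides a proper $3$-edge-colouring, so $E(G)=M_1\cup M_2\cup M_3$ is a union of perfect matchings. Fix a colour class $M$ and take $X=A(M)$, $Y=A(G-M)$; then $X$ has all eigenvalues in $[-1,1]$ (they are $\pm1$), and $G-M$ is a disjoint union of cycles $C_{k_1},\dots,C_{k_r}$ with $\sum_t k_t=n$, so
\[
 n_+(A-I)\le\sum_{t=1}^r n_+(C_{k_t}) \qquad\text{and}\qquad n_+(-A-I)\le\sum_{t=1}^r n_-(C_{k_t}).
\]
The cycle spectrum is explicit: $n_+(C_k)=\lfloor k/2\rfloor$, except that $n_+(C_k)=\tfrac k2-1$ when $4\mid k$ and $n_+(C_k)=\tfrac{k+1}{2}$ when $k\equiv1\pmod4$; the count $n_-(C_k)$ obeys the same rule with the residues $1$ and $3$ modulo $4$ interchanged. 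As $n$ is even for a cubic graph, $h-1=\tfrac n2-1$, so, writing $N_i$ for the number of components of $G-M$ of length $\equiv i\pmod4$, both right-hand sides above are at most $h-1$ exactly when $N_0\ge1+\tfrac12|N_1-N_3|$. In particular the method works as soon as some colour class yields a $2$-factor that is bipartite and has a component whose length is divisible by $4$; for the tight example $G=Q_3$ every perfect matching gives $G-M=C_8$, whence $n_+(A-I)\le n_+(C_8)=3=h-1$, so the estimate is exactly on the margin there.

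The crux is therefore a purely combinatorial statement, to be proved from planarity --- presumably by discharging, or through the structure theory of $2$-factors of planar cubic graphs: \emph{every $2$-edge-connected planar cubic graph admits a $3$-edge-colouring (or, more generally, a $2$-factor) for which $N_0\ge1+\tfrac12|N_1-N_3|$}. Having three colour classes --- or an arbitrary $2$-factor --- to choose from is the leverage, but it is conceivable that a sparse family of planar cubic graphs admits only ``bad'' $2$-factors; for those one would need to sharpen the Weyl estimate, by exploiting its slack through a local perturbation argument or by replacing the matching-plus-$2$-factor splitting with a finer decomposition of $A$, or else dispose of them by a separate, possibly computer-assisted, check --- much as the Heawood graph is the lone exception in the bipartite setting. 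A secondary, more routine but still delicate point is to verify that none of the reductions to the $2$-edge-connected cubic case can raise the relevant median eigenvalue. (A recursion based on the Lipton--Tarjan planar separator theorem is a natural alternative, but the constants it produces fall short of the sharp value~$1$.)
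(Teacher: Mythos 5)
First, a point of order: the statement you are addressing is Conjecture~\ref{conj:planar}, which the paper does not prove --- it is posed as an open problem, and the paper establishes only its bipartite case (as a corollary of Theorem~\ref{thm:cubic bipartite}, via a completely different mechanism: interlacing combined with the ``imbalance'' of a bipartition, Lemmas~\ref{lem:k-unbalanced}--\ref{lem:thick unbalanced}). So your proposal must stand on its own, and, as you concede, it is a program rather than a proof: everything hinges on the italicized combinatorial claim about $2$-factors, which you leave unproved, and on the unexamined reduction to the $2$-edge-connected cubic case. That reduction is itself not routine: deleting a vertex shifts the index in the interlacing theorem by one, so each local simplification must be shown to ``pay for itself'' in the median index --- this bookkeeping is exactly what the paper's imbalance machinery is designed to control, and waving at ``standard reductions together with Cauchy interlacing'' does not discharge it.

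More seriously, the combinatorial crux is false as stated, and not merely for a sparse exceptional family that could be checked separately. Take the triangular prism $K_3\,\square\,K_2$: it is planar, cubic and $3$-edge-connected, its spectrum is $3,1,0,0,-2,-2$, so $R=0$ and the conjecture holds comfortably; yet its only $2$-factors are a single $6$-cycle ($N_0=0$, $N_1=N_3=0$, so $N_0\ge 1$ fails) or two triangles ($N_0=0$, $|N_1-N_3|=2$, so $N_0\ge 2$ fails), and every Tait colouring yields the $6$-cycle. Hence for this graph the Weyl inertia bound through a matching-plus-$2$-factor splitting provably cannot reach $h-1$ on at least one side, no matter which matching is chosen; the slack you would need to recover is not a boundary case but a structural failure of the decomposition (the bound certifies only $\lambda_4\le 1$ where $\lambda_3\le 1$ is needed). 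Any rescue must therefore replace the splitting itself, not just patch finitely many exceptions. A small further inaccuracy: it is not true that every perfect matching of $Q_3$ has complement $C_8$ --- the three ``directional'' matchings leave $2C_4$ (which, as it happens, satisfies your inequality with room to spare). The inertia calculus in your first two displays and the cycle counts $n_\pm(C_k)$ are correct, and the translation of the requirement into $N_0\ge 1+\tfrac12|N_1-N_3|$ checks out; the gap is that this requirement is simply not achievable in general.
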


The conjecture has been verified for planar bipartite graphs in \cite{Moh2}.
In this paper we prove a surprising extension of \cite{Moh2} and of Conjecture \ref{conj:planar} that holds for all bipartite subcubic graphs with a single exception of the Heawood graph (or disjoint union of copies of it).
The following are our main results.

\begin{theorem}
\label{thm:cubic bipartite}
Let $G$ be a bipartite subcubic graph. If every connected component of $G$ is isomorphic to the Heawood graph, then $R(G)=\l_h(G)=|\l_\ell(G)|=\sqrt{2}$. In any other case, the median eigenvalues $\l_h(G)$ and $\l_\ell(G)$ are contained in the interval $[-1,1]$, i.e., $R(G) \le 1$.
\end{theorem}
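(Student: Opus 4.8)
The plan is to reduce the problem to a statement about the number of eigenvalues of the adjacency matrix $A=A(G)$ lying in the open interval $(-1,1)$, and then to prove this by a combination of interlacing and a careful structural analysis forcing the Heawood graph as the only extremal case. Concretely, if $G$ is bipartite on $n$ vertices, then by symmetry of the spectrum the median eigenvalues satisfy $R(G)\le 1$ precisely when the number $m$ of eigenvalues in $[-1,1]$ is at least $2$ when $n$ is even (so that $\lambda_h,\lambda_\ell$ fall in there) and at least $1$ when $n$ is odd. In fact we shall prove the stronger assertion in the abstract: unless $G$ is a disjoint union of Heawood graphs, a \emph{positive fraction} of the eigenvalues lie in $[-1,1]$. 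So the real target is a lower bound of the form: the number of eigenvalues of $A$ in $[-1,1]$ is at least $cn$ for some absolute constant $c>0$, with the Heawood graph (and its multiples) the sole exception.

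The first step is to set up the right quadratic-form machinery. Since $G$ is bipartite with parts $X,Y$, write $A=\begin{pmatrix}0 & B\\ B^{\top} & 0\end{pmatrix}$; the eigenvalues of $A$ in $(-1,1)$ correspond to singular values of $B$ in $[0,1)$, hence to the dimension of the space on which $B^{\top}B \preceq I$ fails to be, i.e.\ to $\dim\ker(I - B^{\top}B)$ plus the positive-index count of $I-B^{\top}B$. The cleaner route, following the approach of Theorem~\ref{thm:cubic} and \cite{Moh2}, is to count eigenvalues of $A$ in $[-1,1]$ via the matrix $M = A^2 - I$ (or $(A-I)(A+I)$): the number of eigenvalues of $A$ in $[-1,1]$ equals $n$ minus the number of positive eigenvalues of $M$, and one estimates the latter by exhibiting a large subspace on which $M$ is negative semidefinite. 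A natural candidate for such a subspace is built locally from the structure of $G$: for each vertex $v$ one writes down a vector supported on $\ball{1}$ or $\ball{2}$ (the closed balls of radius $1$ or $2$ around $v$, using the \verb|\ball| macro already in the paper) that is $M$-negative, and then one must control the overlaps so that a constant fraction of these are linearly independent and jointly span an $M$-negative-semidefinite subspace.

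The second and main step is the extremal analysis that pins down the Heawood graph. Here I would argue by minimal counterexample: take $G$ connected, not Heawood, subcubic bipartite, with $R(G)>1$, and of smallest order. Small cases (girth small, or a vertex of degree $\le 2$, or a short cycle, or a suitable cut) are removed by interlacing: deleting a vertex or an edge, or contracting a degree-two path, changes the median eigenvalue position in a controlled way (eigenvalue interlacing for vertex deletion, and edge-subdivision/suppression identities for subcubic graphs), and one shows these operations strictly decrease a suitable potential while preserving the hypotheses — unless the local configuration is exactly the one occurring in the Heawood graph (girth $6$, $3$-regular, highly symmetric, with every edge on the "right" number of $6$-cycles). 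The key quantitative input is that the Heawood graph is the unique connected subcubic bipartite graph in which the local negative-semidefinite test vectors are mutually orthogonal and each \emph{exactly} on the boundary (eigenvalue $\pm\sqrt2$ with the correct multiplicity $2$), so any deviation — an extra edge, a longer cycle, an irregular vertex — creates genuine slack and pushes a median eigenvalue strictly inside $[-1,1]$, and then a bootstrapping/propagation argument along the connected graph spreads this slack to a positive fraction of the spectrum.

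The hard part will be the second step: making the extremal/propagation argument airtight so that the \emph{only} way to avoid strict slack everywhere is to be a disjoint union of Heawood graphs. In particular one must handle graphs of girth $4$ or $6$ that locally resemble the Heawood graph but are not it (e.g.\ other bipartite cubic graphs of girth $6$), and show the global eigenvalue count still improves; this requires a genuinely combinatorial classification of the "tight" local neighborhoods together with a rigidity statement that tightness at every vertex forces the incidence structure of the Fano plane. I would expect to lean on the explicit spectrum of the Heawood graph ($\{3, \sqrt2^{(6)}, (-\sqrt2)^{(6)}, -3\}$, with the $\pm\sqrt2$ eigenspaces of dimension $6$) as the rigidity anchor, and on Theorem~\ref{thm:cubic} to know a priori that $R(G)\le\sqrt2$ always, so that the task is only to rule out the half-open shell $(1,\sqrt2]$ except for Heawood.
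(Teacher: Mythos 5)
There is a genuine gap: your proposal correctly identifies the reduction (counting eigenvalues of $A$ in $[-1,1]$ via a Courant--Fischer/interlacing argument, which is essentially what the paper does through its notion of the \emph{imbalance} of a partition and Lemma \ref{lem:k-unbalanced}), but the entire combinatorial heart of the theorem --- your ``second and main step'' --- is left as a plan rather than carried out. The paper's substitute for your ``rigidity/propagation argument'' is Lemma \ref{lem:improve imbalance}: for every vertex $v_0$ of a connected non-Heawood component, the ball $\ball{17}$ contains an imbalance-increasing set $C$. Proving this occupies all of Section \ref{sect:improving imbalance} (Claims 1--14) and requires a library of roughly two dozen explicit subcubic graphs (Figures \ref{fig:appendix1}--\ref{fig:appendix3}) whose relevant eigenvalues are certified to be at most $1$ (Lemma \ref{lem:some graphs}). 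Nothing in your sketch produces this analysis; phrases like ``a bootstrapping/propagation argument spreads this slack'' and ``a rigidity statement that tightness at every vertex forces the Fano plane'' name the difficulty without resolving it, and you acknowledge as much.

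Two of your concrete suggestions would moreover fail as stated. First, test vectors for $M=A^2-I$ supported on $\ball{1}$ or $\ball{2}$ cannot work: in any cubic bipartite graph of girth $6$ the ball of radius $2$ around every vertex is the same tree, so no radius-$2$ local certificate can distinguish the Heawood graph (where no nonzero vector $x$ has $x^{\top}(A^2-I)x\le 0$, since its spectrum is $\{\pm 3,\pm\sqrt2\}$) from other girth-$6$ cubic bipartite graphs for which the conclusion must hold. The local analysis necessarily goes much deeper --- the paper needs radius up to $17$ and configurations as large as $H_4$ and $L_{3,3}$. Second, the minimal-counterexample step relying on ``contracting a degree-two path'' has no controlled effect on the position of the median eigenvalue; the paper instead handles degree-$2$ vertices directly (Claims 1, 3, 3A) using $2$-induced paths and the graphs $\widehat{P}_{2t+1}$, $\widehat{P}_7^-$, $\widehat{C}_6$. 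So while your first step is sound and compatible with the paper's framework, the proof of the statement is not present in the proposal.
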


Theorem \ref{thm:cubic bipartite} shows that the median eigenvalues $\l_h$ and $\l_\ell$ are small, but our proof can be tweaked to give much more -- a positive fraction of (median) eigenvalues lie in the interval $[-1,1]$.

\begin{theorem}
\label{thm:cubic bipartite linearly many}
There is a constant $\delta>0$ such that for every bipartite subcubic graph $G$, none of whose connected components is isomorphic to the Heawood graph, all its eigenvalues $\l_i(G)$, where $(\tfrac{1}{2}-\delta)n\le i\le (\tfrac{1}{2}+\delta)n$, belong to the interval $[-1,1]$.
\end{theorem}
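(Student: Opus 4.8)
\medskip
\noindent\textbf{Proof strategy.}\quad
Since $G$ is bipartite, its spectrum is symmetric about $0$, so the eigenvalues lying in $[-1,1]$ occupy a block centred at the median; thus the statement is equivalent to the single inequality
\[
  r(G):=\#\{\,i:\l_i(G)>1\,\}\ \le\ \Bigl(\tfrac12-\delta\Bigr)n .
\]
Equivalently, $A(G)-I$ has at least $(\tfrac12+\delta)n$ non-positive eigenvalues, i.e.\ there is a subspace $U\subseteq\mathbb{R}^{V(G)}$ with $\dim U\ge(\tfrac12+\delta)n$ on which $x^{\mathsf T}A(G)x\le\|x\|^2$. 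Several routine reductions come first. We may assume $G$ is connected and $n$ exceeds an absolute constant: connected bipartite subcubic graphs on boundedly many vertices other than the Heawood graph all have $r<n/2$, with a uniform gap (a finite check plus Theorem~\ref{thm:cubic bipartite}). We may assume the colour classes $X,Y$ are nearly balanced, since if $|X|\le(\tfrac12-\delta)n$ then $r(G)\le|X|$ (the positive eigenvalues of a bipartite graph are the singular values of its biadjacency matrix $B$, and there are $\min\{|X|,|Y|\}$ of them). Vertices of degree at most $2$ are only helpful, so the essential case is a connected cubic bipartite graph $G\ne$ Heawood with $n$ large, and the goal becomes: $B$ has at least $\delta n$ singular values that are $\le1$.

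The key local mechanism is this: if $W\subseteq V(G)$ and $H=G[W]$, then a vector supported on $W$ has $x^{\mathsf T}A(G)x=x^{\mathsf T}A(H)x$, which decouples over the components of $H$; hence, for each component $C$, spanning the eigenvectors of $A(C)$ with eigenvalue $\le1$ yields a valid test space of dimension $\sum_C\#\{\mu\in\operatorname{spec}A(C):\mu\le1\}=\tfrac12|W|+\sum_C s(C)$, where the \emph{surplus} $s(C)=\tfrac12\bigl||X_C|-|Y_C|\bigr|+\#\{i:\sigma_i(B_C)\le1\}\ge0$ is, among connected bipartite subcubic graphs, zero \emph{only} for the Heawood graph (whose biadjacency matrix is the Fano point--line incidence matrix, with singular values $3$ and $\sqrt2$ only). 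Theorem~\ref{thm:cubic bipartite} is exactly the assertion that this surplus is positive for any connected non-Heawood graph. The plan for Theorem~\ref{thm:cubic bipartite linearly many} is to produce $\Omega(n)$ vertex-disjoint, pairwise non-adjacent induced subgraphs of $G$, each of surplus bounded below by an absolute constant, and add up their local test spaces. These would be found by a greedy/discharging procedure drawing on a fixed finite list of configurations --- short even cycles, $K_{3,3}$, the cube, the double star, the claw, and small neighbourhoods of any low-girth structure --- with the hypothesis ``no component is the Heawood graph'' invoked to guarantee that the region of $G$ producing no surplus under this bookkeeping cannot be all of $G$.

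The main obstacle is that bounded-surplus local pieces do not suffice for every family. Chains of Heawood graphs joined by edge-swaps are near-extremal: any induced subgraph obtained by deleting $t$ vertices to cut such a chain into Heawood-like blocks has total surplus essentially $t/2$, exactly cancelling the $t/2$ lost in shrinking $W$, so the $\Omega(n)$ eigenvalues below $1$ that such a graph nevertheless has must be certified by a genuinely quantitative strengthening of the argument behind Theorem~\ref{thm:cubic bipartite}. Concretely, one needs to show that every ``defect'' of $G$ --- each way in which $G$ differs from a disjoint union of Heawood graphs, in particular each join between Heawood-like blocks --- forces a definite positive number of singular values of $B$ to be $\le1$, that any connected bipartite subcubic graph other than the Heawood graph contains $\Omega(n)$ such defects, and that their contributions accumulate. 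Establishing this stability estimate, handling the interaction between nearby defects, and closing the count with a strictly positive linear margin is where essentially all of the work lies.
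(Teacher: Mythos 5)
Your high-level shape --- certify $\Omega(n)$ eigenvalues in $[-1,1]$ by accumulating local contributions from $\Omega(n)$ pairwise distant spots of the graph, combined via interlacing/variational arguments --- is indeed the shape of the paper's proof. But what you have written is a strategy, not a proof, and you say so yourself: the entire local analysis is missing. The paper's core content is exactly the local lemma you defer (Lemma \ref{lem:improve imbalance}): for every vertex $v_0$ of a connected non-Heawood component, the ball $B_{17}(v_0)$ contains a set $C$ lying in one colour class such that $\lambda_{|C|}(Q_C)\le 1$, where $Q_C=G(C\cup N(C))$. Proving this occupies essentially the whole paper (Claims 1--14, a delicate case analysis of $4$-cycles, degree-$2$ vertices, $6$- and $8$-cycles, and couplings of external edges) and rests on an appendix of roughly two dozen explicit small graphs whose relevant eigenvalues are computed. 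The ``stability estimate'' and the ``finite list of configurations'' you gesture at are precisely this lemma and this list; without them there is no proof.

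Moreover, your specific bookkeeping (cut $G$ into vertex-disjoint, non-adjacent blocks and sum their surpluses) runs into exactly the obstruction you describe, and you do not resolve it. The paper uses a different ledger that sidesteps it: the \emph{imbalance} of an ordered partition $(A,B)$, where one never deletes separator vertices but instead \emph{moves} a set $C$ from $A$ to $B$; since $\lambda_{|C|}(Q_C)\le1$, each move raises $t=\lfloor\tfrac12(|B|-|A|+1)\rfloor$ by $|C|$ while raising the threshold index $s$ by only $|C|-1$, a net gain of $1$ with no cost for cutting (Lemmas \ref{lem:increase imbalance} and \ref{lem:increase imbalance converse}). The moves at different $v\in V_0$ do not interact because the locations are chosen pairwise at distance at least $38$, so the graphs $Q_{C_v}$ are disjoint and non-adjacent; and the fact that each local certificate may favour either colour class is handled by the identity $imb(A,B)+imb(B,A)\ge0$ together with taking the larger of the two accumulated imbalances, which yields $\delta=2^{-41}$ via Lemma \ref{lem:k-unbalanced}. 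So the gap is twofold: the decisive local lemma is absent, and the global accounting you propose would itself need the repair that the paper's imbalance formalism already provides.
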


\section{Interlacing and imbalance of partitions}

Let us first recall that eigenvalues of bipartite graphs are symmetric with respect to 0, i.e., if $\l$ is an eigenvalue, then $-\l$ is an eigenvalue as well and has the same multiplicity as $\l$. This in particular implies that $\l_h\ge0$ and that $\l_\ell=-\l_h$. Therefore, it suffices to consider $\l_h$.

Let us next recall the eigenvalue interlacing theorem (cf., e.g., \cite{GR}) that will be our main tool in the sequel.

\begin{theorem}
\label{thm:interlacing}
Let\/ $A\subset V(G)$ be a vertex set of cardinality $k$, and let $K=G-A$. Then for every $i=1,\dots,n-k$, we have
$$
    \l_i(G)\ge \l_i(K)\ge \l_{i+k}(G).
$$
\end{theorem}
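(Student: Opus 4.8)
The plan is to deduce both interlacing inequalities from the Courant--Fischer min--max characterization of the eigenvalues of a real symmetric matrix. Write $M$ for the adjacency matrix of $G$, an $n\times n$ symmetric matrix, and $M'$ for the adjacency matrix of $K=G-A$, of order $n-k$. Identify $\mathbb{R}^{V(K)}$ with the coordinate subspace
$W=\{x\in\mathbb{R}^{V(G)}:x_a=0\text{ for all }a\in A\}$.
Since $K$ is an induced subgraph, deleting the rows and columns indexed by $A$ turns $M$ into $M'$, so for every $x\in W$ we have $x^{\top}Mx=x'^{\top}M'x'$ and $\|x\|=\|x'\|$, where $x'$ is the restriction of $x$ to $V(K)$; in particular the Rayleigh quotients of $M$ on $W$ and of $M'$ agree under this identification. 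Recall that for a symmetric matrix with eigenvalues $\mu_1\ge\cdots\ge\mu_m$ one has
$$
  \mu_i=\max_{\substack{U\ \mathrm{subspace}\\ \dim U=i}}\ \min_{0\ne x\in U}\frac{x^{\top}Mx}{x^{\top}x}.
$$

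First I would prove the left inequality $\l_i(G)\ge\l_i(K)$. Let $U\subseteq W$ be an $i$-dimensional subspace attaining the maximum in the Courant--Fischer formula for $M'$, so that $\min_{0\ne x\in U}x'^{\top}M'x'/x'^{\top}x'=\l_i(K)$. Viewing $U$ as an $i$-dimensional subspace of $\mathbb{R}^{V(G)}$ and using that the Rayleigh quotients of $M$ and of $M'$ coincide on $W\supseteq U$, the subspace $U$ is an admissible competitor in the maximum defining $\l_i(G)$; hence $\l_i(G)\ge\min_{0\ne x\in U}x^{\top}Mx/x^{\top}x=\l_i(K)$.

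For the right inequality $\l_i(K)\ge\l_{i+k}(G)$, fix an orthonormal basis $v_1,\dots,v_n$ of eigenvectors of $M$ with $Mv_j=\l_j(G)v_j$, and set $P=\mathrm{span}(v_1,\dots,v_{i+k})$, of dimension $i+k$. From $\dim P+\dim W\ge(i+k)+(n-k)=n+i$ we get $\dim(P\cap W)\ge i$, so we may choose an $i$-dimensional subspace $U\subseteq P\cap W$. Every nonzero $x\in P$ satisfies $x^{\top}Mx/x^{\top}x\ge\l_{i+k}(G)$, and every $x\in W$ has Rayleigh quotient equal to that of $M'$; therefore $\min_{0\ne x\in U}x'^{\top}M'x'/x'^{\top}x'\ge\l_{i+k}(G)$, and since $U$ (identified inside $\mathbb{R}^{V(K)}$) is $i$-dimensional, the max-characterization of $\l_i(K)$ yields $\l_i(K)\ge\l_{i+k}(G)$.

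The argument is essentially bookkeeping once the min--max machinery is invoked; the only points needing care are the dimension count $\dim(P\cap W)\ge i$ and keeping the eigenvalue indexing (nonincreasing order) consistent between $G$ and $K$. Alternatively one may simply cite \cite{GR}, where this interlacing statement is recorded in the same form, or obtain it by iterating the case $k=1$, which follows at once by applying the two one-line Courant--Fischer formulas to a single coordinate hyperplane.
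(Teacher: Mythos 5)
Your proof is correct. The paper does not actually prove Theorem~\ref{thm:interlacing}; it records it as a known fact with a pointer to \cite{GR}, so there is no in-paper argument to compare against. Your Courant--Fischer derivation is the standard one and is complete: the identification of $\mathbb{R}^{V(K)}$ with the coordinate subspace $W$, the use of an optimal subspace for $M'$ as a competitor for $\l_i(G)$, and the dimension count $\dim(P\cap W)\ge (i+k)+(n-k)-n=i$ for the lower bound are all handled correctly, so the proposal stands as a valid self-contained proof of the cited result.
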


If $V(G)=A\cup B$ is a partition of the vertices of $G$, we denote by $G(B)=G-A$ the subgraph of $G$ induced on $B$.
%
%
In the sequel we will consider vertex partitions $V(G)=A\cup B$, but the two parts $A,B$ will play different roles. Thus, we shall consider such a partition as an ordered pair $(A,B)$. Given a partition $(A,B)$ of $V(G)$, let $s\ge1$ be the smallest integer such that $\l_s(G(B))\le1$, and let $t=\left\lfloor \tfrac{1}{2}(|B|-|A|+1)\right\rfloor$. Then we say that the partition $(A,B)$ is \DEF{$(s,t)$-imbalanced}, and we define the \DEF{imbalance} of the partition $(A,B)$ as $$imb(A,B)=t-s+1.$$

\begin{lemma}
\label{lem:k-unbalanced}
Suppose that\/ $(A,B)$ is an $(s,t)$-imbalanced vertex partition of a subcubic graph $G$. If $r=imb(A,B)-1$, then $\l_{h-r}(G)\le1$. In particular, if $imb(A,B)>0$, then $\l_h\le1$.
\end{lemma}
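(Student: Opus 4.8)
The plan is to read off the conclusion directly from the interlacing theorem (Theorem~\ref{thm:interlacing}) applied to the set $A$, bookkeeping indices carefully. Write $K=G(B)=G-A$, $k=|A|$, and $m=|B|=n-k$. By definition of $s$, we have $\l_s(K)\le 1$, and by interlacing $\l_s(G)\ge\l_s(K)\ge\l_{s+k}(G)$, so in particular $\l_{s+k}(G)\le 1$. The whole game is therefore to show that $s+k\le h-r$, equivalently $h-(s+k)\ge r=t-s$, i.e. $h-k\ge t$. Since $h=\lfloor\tfrac{n+1}{2}\rfloor$ and $t=\lfloor\tfrac12(m-k+1)\rfloor=\lfloor\tfrac12(n-2k+1)\rfloor$, I would just check the floor arithmetic: $h-k=\lfloor\tfrac{n+1}{2}\rfloor-k=\lfloor\tfrac{n+1-2k}{2}\rfloor=\lfloor\tfrac12(n-2k+1)\rfloor=t$. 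Hence $h-k=t$ exactly, so $s+k=s+(h-t)=h-(t-s)=h-r$, and interlacing gives $\l_{h-r}(G)\le\l_s(K)\le 1$, as desired.

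For the ``in particular'' clause, note that if $imb(A,B)>0$ then $r=imb(A,B)-1\ge 0$, so $h-r\le h$; since eigenvalues are non-increasing, $\l_h(G)\le\l_{h-r}(G)\le 1$. One small point to handle with care: the interlacing bound $\l_{s+k}(G)\le 1$ is only meaningful when $s+k\le n$ (and the interlacing statement itself requires the index $i=s$ to satisfy $s\le n-k=m$). Because $s$ is defined as the smallest index with $\l_s(K)\le 1$ and $K$ is a graph on $m$ vertices whose least eigenvalue is $\l_m(K)\le 0<1$ (a nonempty graph — or if $B=\emptyset$ the statement is vacuous in the relevant range), such an $s\le m$ always exists, so $s+k\le m+k=n$ and the application is legitimate. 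I should state this existence remark explicitly rather than leaving it implicit.

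The only genuine content here is the identity $h-k=t$, which is why the definition of $t$ was chosen the way it was; everything else is a one-line invocation of Theorem~\ref{thm:interlacing}. So I do not expect a real obstacle — the ``hard part'' is merely being scrupulous about floor/ceiling manipulations and about the edge cases ($B$ empty, or $s$ close to $m$) so that the index $h-r$ lands in the valid range $1\le h-r\le n$ where the interlacing inequality has been asserted. I would present the proof as: (1) set up notation and recall $\l_s(K)\le 1$ with $s\le m$; (2) apply interlacing to get $\l_{s+k}(G)\le 1$; (3) verify $k+t=h$ by floor arithmetic; (4) conclude $\l_{h-r}(G)\le 1$ and then $\l_h(G)\le 1$ when $r\ge 0$.
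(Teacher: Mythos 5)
Your proof is correct and follows essentially the same route as the paper's: both apply the interlacing theorem to the deletion of $A$ and verify by floor arithmetic that the relevant index $|A|+s$ equals $h-r$ (the paper splits into the cases $n$ even/odd, you observe $h-k=t$ directly). Your explicit remark that $s\le|B|$ always exists is a reasonable extra precaution that the paper leaves implicit.
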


\begin{proof}
Conditions of the lemma give that $\l_s(G(B))\le1$ and
$$
   r=\left\lfloor \tfrac{1}{2}(|B|-|A|+1)\right\rfloor - s.
$$
If $n=|A|+|B|$ is even, then $r=\tfrac{1}{2}|B|-\tfrac{1}{2}|A|- s = \tfrac{n}{2}-(|A|+s)$. If $n$ is odd, then $r=\tfrac{n+1}{2}-(|A|+s)$. In each case,
$$
   |A|+s = \left\lfloor\tfrac{n+1}{2}\right\rfloor - r = h-r.
$$
Since $G(B)=G-A$ is obtained from $G$ by deleting $|A|$ vertices and $|A|+s\le h$, the eigenvalue interlacing theorem shows that $\l_{h-r}(G) = \l_{|A|+s}(G)\le \l_s(G(B))\le 1$.
\end{proof}

Let $(A,B)$ be a partition of $V(G)$. Suppose that $C\subseteq A$ is a set of vertices in $A$.
We say that $C$ \DEF{increases imbalance} of $(A,B)$ 
if $imb(A\setminus C, B\cup C) > imb(A,B)$. The following result will be our main tool for finding imbalance-increasing vertex-sets.

\begin{lemma}
\label{lem:increase imbalance}
Suppose that\/ $(A,B)$ is a partition of\/ $V(G)$ and $C\subseteq A$.
Let $Q$ be the union of those connected components of $G(B\cup C)$ that contain vertices in $C$.
If $\l_{|C|}(Q)\le1$, then $C$ increases imbalance of $(A,B)$.
\end{lemma}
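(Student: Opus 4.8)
The plan is to track separately how the two quantities in the definition of $imb$ change when $(A,B)$ is replaced by $(A',B'):=(A\setminus C,\,B\cup C)$. Write $c=|C|\ge 1$, and let $(s,t)$ and $(s',t')$ be the pairs for which $(A,B)$ and $(A',B')$ are imbalanced. Since $|A'|=|A|-c$ and $|B'|=|B|+c$, we have $|B'|-|A'|+1=(|B|-|A|+1)+2c$, and as $2c$ is even this yields $t'=t+c$. Hence $imb(A',B')=t'-s'+1=(t+c)-s'+1$, so it suffices to prove $s'\le s+c-1$, since that already gives $imb(A',B')\ge imb(A,B)+1$.

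To estimate $s'$ I would count how many eigenvalues of $G(B')=G(B\cup C)$ exceed $1$. Put $B_Q=V(Q)\cap B$, so that $V(Q)=C\cup B_Q$. Because $Q$ is a union of connected components of $G(B\cup C)$, the graph $G(B')$ is the disjoint union of $Q$ and the induced subgraph $G(B\setminus B_Q)$, and the spectrum of a disjoint union is the multiset union of the spectra of the parts; thus the number of eigenvalues of $G(B')$ larger than $1$ is the sum of the corresponding numbers for $Q$ and for $G(B\setminus B_Q)$.

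I then bound each of these counts. From $\l_{|C|}(Q)=\l_c(Q)\le 1$ it follows that $\l_j(Q)\le 1$ for all $j\ge c$, so $Q$ has at most $c-1$ eigenvalues exceeding $1$. For the other summand, minimality in the definition of $s$ gives that $G(B)$ has at most $s-1$ eigenvalues larger than $1$; since $G(B\setminus B_Q)$ is obtained from $G(B)$ by deleting the $|B_Q|$ vertices of $B_Q$, eigenvalue interlacing (Theorem~\ref{thm:interlacing}) gives $\l_i(G(B\setminus B_Q))\le\l_i(G(B))$ for all admissible $i$, whence $G(B\setminus B_Q)$ also has at most $s-1$ eigenvalues larger than $1$. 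Adding, $G(B')$ has at most $(c-1)+(s-1)=s+c-2$ eigenvalues exceeding $1$. Since $|B'|\ge c\ge 1$, the graph $G(B')$ is nonempty and its smallest eigenvalue is $\le 0$, so the least index $j$ with $\l_j(G(B'))\le 1$ satisfies $j\le s+c-1$; that is, $s'\le s+c-1$, as required.

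This lemma is essentially a bookkeeping argument, so I do not anticipate a genuine obstacle; the points that need a little care are the parity remark giving $t'=t+c$ exactly, and the justification that replacing $G(B)$ by the disjoint union $Q\sqcup G(B\setminus B_Q)$ increases the number of eigenvalues above $1$ by at most $c-1$ — which rests on the disjoint-union structure coming from $Q$ being a union of components, together with interlacing applied to the deletion of $B_Q$ from $G(B)$.
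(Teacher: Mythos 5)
Your proof is correct and follows essentially the same route as the paper: compute $t'=t+|C|$, and bound $s'\le s+|C|-1$ by observing that $G(B\cup C)$ decomposes as $Q$ together with components of $G(B)$, each part contributing at most $|C|-1$ resp.\ $s-1$ eigenvalues exceeding $1$. Your version merely spells out the details the paper leaves implicit (the parity check for $t'$, the disjoint-union spectral count, and the well-definedness of $s'$).
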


\begin{proof}
Let $(A,B)$ be $(s,t)$-imbalanced. Let $k=|C|$ and $(A',B')=(A\setminus C, B\cup C)$. Then $t':= t+k = \left\lfloor \tfrac{1}{2}(|B'|-|A'|+1)\right\rfloor$. Note that $G(B)$ and $G(B')$ have the same connected components except for those contained in $Q$. Since $\l_k(Q)\le 1$ and $\l_s(G(B))\le 1$, we have that $\l_{s+k-1}(G(B'))\le1$. Thus $(A',B')$ is $(s',t')$-imbalanced, where $s'\le s+k-1$. Hence,
$imb(A',B') \ge (t+k) - (s+k-1) + 1 = imb(A,B) + 1$.
\end{proof}

For $C\subseteq V(G)$, let $N(C)$ denote the set of all vertices in $V(G)\setminus C$ that have a neighbor in $C$. The statement of Lemma \ref{lem:increase imbalance} has a converse under a mild restriction on $N(C)$.

\begin{lemma}
\label{lem:increase imbalance converse}
Suppose that\/ $(A,B)$ is a partition of\/ $V(G)$ and $C\subseteq A$. If\/ $N(C)\subseteq B$ and every vertex in $N(C)$ has all its neighbors in $A$, then $C$ increases imbalance of $(A,B)$ if and only if $\l_{|C|}(Q)\le1$, where $Q=G(C\cup N(C))$.
\end{lemma}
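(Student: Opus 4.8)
The plan is not to prove the two implications separately but to derive an exact formula for the change in imbalance caused by moving $C$ from $A$ to $B$, and then to read off the claimed equivalence. Write $(A',B')=(A\setminus C,\,B\cup C)$ and $k=|C|$, and suppose $(A,B)$ is $(s,t)$-imbalanced. Exactly as in the proof of Lemma~\ref{lem:increase imbalance}, the ``$t$-part'' behaves trivially: $t':=\lfloor\tfrac12(|B'|-|A'|+1)\rfloor=t+k$. So the whole statement reduces to understanding $s'$, the least index with $\l_{s'}(G(B'))\le1$, in terms of $s$ and the spectrum of $Q$.

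First I would extract the structural content of the two hypotheses. The condition $N(C)\subseteq B$ says precisely that no vertex of $B\setminus N(C)$ is adjacent to a vertex of $C$; the condition that every vertex of $N(C)$ has all its neighbours in $A$ says precisely that no vertex of $B\setminus N(C)$ is adjacent to a vertex of $N(C)$. Consequently, writing $H:=G(B\setminus N(C))$, the graph $G(B)$ is the disjoint union of $H$ and the edgeless graph on the vertex set $N(C)$, while $G(B')=G(B\cup C)$ is the disjoint union of $H$ and $Q=G(C\cup N(C))$. (In particular $C\cup N(C)$ is then a union of connected components of $G(B\cup C)$, each meeting $C$, so this $Q$ agrees with the $Q$ in Lemma~\ref{lem:increase imbalance}; the forward implication is therefore already covered by that lemma, and the content here is the converse.)

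Then comes the spectral bookkeeping. For a graph $X$ let $m(X)$ be the number of its eigenvalues that are strictly larger than $1$; since the eigenvalues are listed in non-increasing order, $s=m(G(B))+1$, and this is well defined because every nonempty graph has smallest eigenvalue at most its average, namely $0$. An edgeless graph contributes nothing to $m(\cdot)$, so from the two disjoint-union decompositions I get $m(G(B))=m(H)$ and $m(G(B'))=m(H)+m(Q)$, hence $s'=s+m(Q)$. Substituting,
$$
 imb(A',B')=t'-s'+1=(t+k)-\bigl(s+m(Q)\bigr)+1=imb(A,B)+\bigl(|C|-m(Q)\bigr).
$$
Therefore $C$ increases imbalance if and only if $m(Q)<|C|$, and, once more because the eigenvalues of $Q$ are non-increasing (and $|C|\le|V(Q)|$ since $C\subseteq V(Q)$), this is equivalent to $\l_{|C|}(Q)\le1$.

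I expect the one point needing care to be the structural claim above: the clean splitting of $G(B')$ into $H$ and $Q$ with no edges between $B\setminus N(C)$ and $C\cup N(C)$ is exactly where both hypotheses on $N(C)$ enter, and if either is dropped such edges can appear, the identity $s'=s+m(Q)$ fails, and the converse implication becomes false. The remaining items---well-definedness of $s$ and $s'$, and the bound $|C|\le|V(Q)|$---are immediate but worth stating explicitly.
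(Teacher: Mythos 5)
Your proof is correct and follows essentially the same route as the paper's: the heart of both arguments is that the two hypotheses force $G(B)$ and $G(B\cup C)$ to decompose as a common part $H=G(B\setminus N(C))$ together with, respectively, the isolated vertices of $N(C)$ and the graph $Q$, so that counting eigenvalues exceeding $1$ yields $s'=s+m(Q)$ (with $m(Q)$ the number of eigenvalues of $Q$ exceeding $1$) while $t'=t+k$. Packaging this as the exact identity $imb(A',B')=imb(A,B)+(|C|-m(Q))$ is a tidy way to obtain both implications at once, where the paper instead invokes Lemma~\ref{lem:increase imbalance} for the forward direction and argues the converse by the same spectral comparison.
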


\begin{proof}
Let us first observe that the independence of $N(C)$ in $G(B)$ implies that the subgraph $Q$ that appears in Lemma \ref{lem:increase imbalance} is equal to $G(C\cup N(C))$. Therefore, it remains to prove only the direction converse to the one of the previous lemma, i.e., that the increased imbalance implies that $\l_{|C|}(Q)\le1$.

Let us adopt the notation used in the proof of Lemma \ref{lem:increase imbalance} and let $s'$ be the smallest integer such that $\l_{s'}(G(B'))\le1$. Let us assume that $imb(A',B') = t'-s'+1 > imb(A,B) = t-s+1$. Since $t'=t+k$, it suffices to see that $s'=s+k-1$. However, this is an easy observation since $G(B)$ and $G(B')$ have the same eigenvalues apart from the eigenvalues of $Q$ in $G(B')$ that are replaced by $|N(C)|$ eigenvalues, all equal to 0, in $G(B)$.
\end{proof}

Suppose now that $G$ is a bipartite graph and that $(A,B)$ is its bipartition.
A set $U$ of vertices of $G$ is \DEF{$A$-thick} (\DEF{$B$-thick}) in $G$ if every vertex in $U\cap A$ (resp.\ $U\cap B$) has at most one neighbor in $V(G)\setminus U$, every vertex in $B\setminus U$ (resp.\ $A\setminus U$) has at most one neighbor in $U$, and $|U\cap A|>|U\cap B|$ (resp.\ $|U\cap A|<|U\cap B|$). The set $U$ is \DEF{thick} if it is either $A$-thick or $B$-thick.

\begin{lemma}
\label{lem:thick unbalanced}
If\/ $U$ is an $A$-thick set of vertices in $G$, then the set $C=A\cap U$ increases imbalance of the bipartition $(A,B)$.
\end{lemma}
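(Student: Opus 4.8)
The plan is to invoke Lemma~\ref{lem:increase imbalance} for $C=A\cap U$, so that everything reduces to showing $\l_{|C|}(Q)\le1$, where $Q$ is the union of those connected components of $G(B\cup C)$ that meet~$C$. Since $G$ is bipartite with parts $A,B$ and $C\subseteq A$, a $B$-vertex of $G(B\cup C)$ has all its neighbors in $C$, so $V(Q)=C\cup N(C)$, and because $N(C)\subseteq B$ is independent we in fact have $Q=G(C\cup N(C))$. Writing $W=N(C)$, I would split $W=W_0\cup L$ with $W_0=W\cap U$ and $L=W\setminus U$.

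This decomposition has two features I would exploit. First, $|W_0|\le|U\cap B|<|U\cap A|=|C|$, so the ``core'' subgraph $G(C\cup W_0)$ has its $B$-side strictly smaller than $C$. Second, every $\ell\in L$ lies in $B\setminus U$, so by $A$-thickness $\ell$ has at most one neighbor in $U$, hence exactly one neighbor in $C$; thus $\ell$ is a pendant vertex of $Q$, and the map sending each leaf to its neighbor is injective, since two leaves sharing a neighbor $c\in C$ would give $c$ two neighbors outside $U$, again contradicting $A$-thickness.

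For the final step I would consider the biadjacency matrix $M$ of $Q$ with rows indexed by $C$ and columns by $W=W_0\cup L$, written $M=[\,M_0\mid M_L\,]$ accordingly. By the injectivity above, the columns of $M_L$ are distinct standard basis vectors, so $M_LM_L^{\top}=D$ is a $0$--$1$ diagonal matrix, in particular $0\preceq D\preceq I$; and $M_0$ has at most $|W_0|\le|C|-1$ columns, so the $|C|\times|C|$ matrix $M_0M_0^{\top}$ is singular. Choosing $x\ne0$ with $M_0^{\top}x=0$ then gives $x^{\top}MM^{\top}x=x^{\top}Dx\le\|x\|^2$, whence $\l_{\min}(MM^{\top})\le1$. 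Since the positive eigenvalues of the bipartite graph $Q$ are exactly the singular values of $M$, whose squares are the eigenvalues of $MM^{\top}$, at most $|C|-1$ eigenvalues of $Q$ exceed $1$, i.e.\ $\l_{|C|}(Q)\le1$, as required. The one delicate point — and the place where $A$-thickness is genuinely used — is that the pendant set $L$, however numerous, contributes only the perturbation $D\preceq I$ to $MM^{\top}$; without the injectivity this perturbation could be arbitrarily large and $\l_{\min}(MM^{\top})$ could exceed~$1$.
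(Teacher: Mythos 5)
Your proof is correct, and its overall skeleton matches the paper's: reduce via Lemma~\ref{lem:increase imbalance} to showing $\l_{|C|}(Q)\le1$ for $Q=G(C\cup N(C))$, and then exploit the two consequences of $A$-thickness, namely $|N(C)\cap U|\le|U\cap B|<|C|$ and the fact that each vertex of $N(C)\setminus U$ is a pendant vertex of $Q$ whose neighbours in $C$ are pairwise distinct. (The paper cites Lemma~\ref{lem:increase imbalance converse} at this point, but, like you, it really only needs the forward direction.) Where you genuinely differ is the mechanism for the bound $\l_{|C|}(Q)\le1$: the paper deletes the $t=|B\cap U|<|C|$ vertices of $B\cap U$ from $Q$, notes that the remainder has maximum degree $1$ (each $c\in C$ retains at most its single neighbour outside $U$, and each remaining $B$-vertex at most its single neighbour in $U$), so its spectral radius is at most $1$, and concludes by interlacing, $\l_{|C|}(Q)\le\l_{t+1}(Q)\le\l_1(Q-(B\cap U))\le1$; you instead run a singular-value/Rayleigh-quotient argument on the biadjacency matrix, using the rank deficiency of $M_0$ together with $M_LM_L^\top=D\preceq I$. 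Both are sound two-step arguments; the paper's version recycles the interlacing theorem that drives the whole paper, while yours isolates, via the decomposition $MM^\top=M_0M_0^\top+D$, exactly where the injectivity of the leaf-to-neighbour map is used (in the paper's version that same injectivity is what keeps the degrees at most $1$ after deletion).
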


\begin{proof}
Consider the subgraph $Q$ of $G(B\cup C)$ consisting of those connected components that contain vertices in $C$, and let $t=|B\cap U|<|C|$. Thickness condition implies that after removing vertices in $B\cap U$ from $Q$, we are left with a graph consisting of a matching and isolated vertices, so its eigenvalues are all in the interval $[-1,1]$. By the eigenvalue interlacing theorem, we conclude that $\l_{|C|}(Q) \le \l_{t+1}(Q) \le \l_1(Q-(B\cap U)) \le 1$, so $C$ increases imbalance of $(A,B)$ by Lemma \ref{lem:increase imbalance converse}.
\end{proof}

\section{Improving imbalance}

\label{sect:improving imbalance}

In this section we prove that for every vertex $v_0$ of a connected bipartite subcubic graph $G$ with bipartition $(A,B)$, if $G$ is not isomorphic to the Heawood graph, then a small neighborhood around $v_0$ contains a set $C$ that can be used to increase imbalance of the partition $(A,B)$ or $(B,A)$. From now on we assume that $G$ is bipartite and $(A,B)$ is the bipartition of $G$.

Given a graph $G$ and its vertex $v$, we denote by $B_r(v)$ the set of all vertices of $G$ whose distance from $v$ is at most $r$. We will sometimes consider the set $B_r(v)$ as the subgraph of $G$ induced on this vertex set.

\begin{lemma}
\label{lem:improve imbalance}
Suppose that\/ $(A,B)$ is the bipartition of a bipartite subcubic graph $G$, $v_0\in V(G)$, and the connected component of $G$ containing $v_0$ is not isomorphic to the Heawood graph. Then $\ball{17}$ contains a set $C$ of vertices such that either $C\subseteq A$ and $C$ increases imbalance of $(A,B)$, or
$C\subseteq B$ and $C$ increases imbalance of $(B,A)$.
\end{lemma}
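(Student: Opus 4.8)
The plan is to reduce, via Lemmas~\ref{lem:increase imbalance}--\ref{lem:thick unbalanced}, to a spectral condition on small induced subgraphs near $v_0$, and then to split according to whether $\ball{16}$ is a complete cubic tree or contains a ``defect''.

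First I reformulate the goal spectrally. Since $G$ is bipartite, for every $C\subseteq A$ the hypotheses of Lemma~\ref{lem:increase imbalance converse} hold automatically, so $C$ increases imbalance of $(A,B)$ if and only if $\lambda_{|C|}(Q)\le 1$ for $Q=G(C\cup N(C))$, and symmetrically for $C\subseteq B$. Writing the adjacency matrix of the bipartite graph $Q$ in block form $\bigl(\begin{smallmatrix}0&M\\ M^{\!\top}&0\end{smallmatrix}\bigr)$ with the rows of $M$ indexed by $C$, one has $\lambda_{|C|}(Q)=\sqrt{\lambda_{\min}(MM^{\!\top})}$ and $MM^{\!\top}=D+A_H$, where $D$ is the diagonal matrix of $G$-degrees of the vertices of $C$ and $A_H$ has $(c,c')$-entry equal to the number of common neighbours of $c$ and $c'$. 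Hence it suffices to find, inside $\ball{17}$, a set $C$ lying entirely in $A$ or entirely in $B$ with $\lambda_{\min}(D+A_H)\le 1$; when $C$ meets only vertices of degree $3$ this says $\lambda_{\min}(A_H)\le -2$.

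Now the case split. If $\ball{16}$ contains a vertex of degree at most $2$ or lies on a cycle, I use that defect: either Hall's condition fails for some $C'\subseteq C$ (which happens near a low-degree vertex and gives $\lambda_{|C|}(Q)=0$ outright), or a low-degree vertex lowers a diagonal entry of $D$ below $3$, or a short cycle forces a suitable long even cycle into the graph underlying $A_H$; in each configuration a bounded set $C$ built around a shortest $v_0$--defect path has $\lambda_{\min}(D+A_H)\le 1$ and stays inside $\ball{17}$. The remaining, principal case is that $\ball{16}$ is a complete cubic tree. For any $C\subseteq A$ inside it, the graph underlying $A_H$ is a subgraph of the ``tree of triangles'' that is the distance-$2$ graph of the infinite cubic tree $T$ on its $A$-part; since the adjacency operator of that graph is $A(T)^2-3I$ restricted to $\ell^2(A)$ and $\operatorname{spec}A(T)=[-2\sqrt2,\,2\sqrt2\,]$, its least spectral value is exactly $-3$. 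Consequently a finite, bounded-radius portion of it already has least eigenvalue below $-2$: I take $C$ to consist of the vertices of a finitely supported test function $x$ with $\langle A_Hx,x\rangle\le -2\|x\|^2$, of small enough radius that $C\cup N(C)\subseteq\ball{17}$ (the distance-$2$ ball of radius $8$ about $v_0$ lies in $\ball{16}$, its neighbourhood in $\ball{17}$, and a direct estimate puts its least eigenvalue below $-2$). Finally the Heawood graph must genuinely be excluded: it has $14$ vertices and diameter $3$, so $\ball{17}$ is all of it, and one checks for each $|C|\in\{1,\dots,7\}$ that no $C$ in either part achieves $\lambda_{\min}(D+A_H)\le 1$ there.

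The main obstacle is the cubic-tree case: one must exhibit a concrete, not-too-large subconfiguration of the tree of triangles whose least eigenvalue drops below $-2$ and check that it embeds within distance $17$ of $v_0$ — this is what pins down the constant $17$ — and then verify that the defect cases are also always realised within that radius, so that the Heawood graph remains the sole exception. Controlling all of these sub-cases uniformly by the single bound $17$ is the delicate point.
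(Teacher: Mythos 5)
Your spectral reformulation is correct and is, implicitly, what underlies the paper's method: for $C\subseteq A$ in a bipartite graph the hypotheses of Lemma~\ref{lem:increase imbalance converse} do hold automatically, $\l_{|C|}(Q)=\sqrt{\smash[b]{\l_{\min}(MM^{\top})}}$, and $MM^{\top}=D+A_H$ with $A_H$ the common-neighbour matrix. Your ``complete cubic tree'' case is also essentially right, and in fact far easier than you make it: you do not need a radius-$8$ ball of the distance-$2$ graph or any asymptotic appeal to $\operatorname{spec}A(T)=[-2\sqrt2,2\sqrt2]$. The radius-$1$ configuration already works: taking $C$ to be a vertex $v$ together with the six vertices at distance $2$ gives $D+A_H=3I+A_H$ with $A_H$ the cone over $3K_2$, whose least eigenvalue is exactly $-2$; this is the paper's Claim~5 with the tree $B_3$, for which $\l_7(B_3)=1$. (Your assertion that the radius-$8$ ball has least eigenvalue below $-2$ is true, but only because of this — by interlacing from the radius-$1$ principal submatrix — and you never actually carry out the ``direct estimate'' you invoke.)

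The genuine gap is the other case. You dispose of it with ``a short cycle forces a suitable long even cycle into the graph underlying $A_H$; in each configuration a bounded set $C$ built around a shortest $v_0$--defect path has $\l_{\min}(D+A_H)\le1$.'' This is not an argument, and as a blanket claim it is false: the Heawood graph is cubic, bipartite, and every vertex lies on many $6$-cycles, yet no $C$ in either class increases imbalance — that is precisely why it is the exception. The same obstruction persists locally: a larger graph can contain induced near-copies of the Heawood graph (the graphs $H_2$, $H_3$, $H_4$ of the paper, each the Heawood graph minus an edge or close to it), and for these one must find a $C$ that reaches \emph{outside} the configuration (Claim~8) — no ``$C$ built around a short cycle'' inside the configuration works. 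The entire content of the lemma is the classification of what the presence of $4$-, $6$-, $8$- and $12$-cycles, degree-$2$ vertices, and external edges forces within bounded radius (the paper's Claims~2--14), terminating in one of roughly twenty-five explicit configurations whose eigenvalue condition $\l_{\min}(D+A_H)\le1$ must be verified one by one (Lemma~\ref{lem:some graphs}). Your proposal contains none of this, so the case that carries all the difficulty — and that distinguishes the Heawood graph from every other cubic bipartite graph — is left unproved.
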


Before giving the proof of Lemma \ref{lem:improve imbalance}, let us show how the lemma implies our main results, Theorems \ref{thm:cubic bipartite} and \ref{thm:cubic bipartite linearly many}.

\begin{proof}[Proof of Theorems \ref{thm:cubic bipartite} and \ref{thm:cubic bipartite linearly many}]
The proof of Theorem \ref{thm:cubic bipartite} follows from the proof of Theorem \ref{thm:cubic bipartite linearly many} given below by taking $V_0=\{v_0\}$, where $v_0$ is an arbitrary vertex of $G$. Thus we only need to take care of Theorem \ref{thm:cubic bipartite linearly many}.

Let $G$ be a bipartite subcubic graph with no component isomorphic to the Heawood graph. For each vertex $v$ of $G$, we have $|B_r(v)|< 3\cdot 2^r$. Therefore, $G$ contains a set $V_0$ of vertices that are mutually at distance at least 38 and $|V_0|>\ve n$, where $\ve=2^{-40}$.
Let us consider the bipartition $(A,B)$ of $G$, and let, for each $v\in V_0$, $C_v$ be the vertex set $C$ obtained by applying Lemma \ref{lem:improve imbalance}. Let $a$ denote the number of vertices $v\in V_0$ such that $C_v\subseteq A$, and let $b=|V_0|-a$ be the number of cases where $C_v\subseteq B$.

Note that $\l_1(G(A))=\l_1(G(B))=0$. Thus,
\begin{equation}
\label{eq:pf1}
imb(A,B) + imb(B,A) = \lfloor \tfrac{1}{2}(|B|-|A|+1)\rfloor + \lfloor \tfrac{1}{2}(|A|-|B|+1)\rfloor \ge 0.
\end{equation}
Let $(A',B')$ be the partition obtained from $(A,B)$ by removing from $A$ and adding into $B$ all sets $C_v$ ($v\in V_0$) for which $C_v\subseteq A$. Since every $C_v$ is contained in $B_{17}(v)$, all these sets are pairwise at distance at least 4 from each other. Consequently, their graphs $Q=Q_{C_v}$ are pairwise disjoint and non-adjacent. Hence, each of these sets increases imbalance of $(A,B)$ by at least 1 (Lemma \ref{lem:increase imbalance converse}). Therefore,
\begin{equation}
\label{eq:pf2}
   imb(A',B')\ge imb(A,B) + a.
\end{equation}
Similarly, adding to $A$ all sets $C_v$ ($v\in V_0$) for which $C_v\subseteq B$, we obtain a partition $(B'',A'')$ such that
\begin{equation}
\label{eq:pf3}
   imb(B'',A'')\ge imb(B,A) + b.
\end{equation}
Finally, all three inequalities (\ref{eq:pf1})--(\ref{eq:pf3}) imply that
\begin{equation}
\label{eq:pf4}
imb(A',B') + imb(B'',A'') \ge imb(A,B) + imb(B,A) + a + b > \ve n.
\end{equation}
By symmetry, we may assume that $imb(A',B') \ge imb(B'',A'')$.
Then (\ref{eq:pf4}) implies that $imb(A',B')\ge \tfrac{1}{2}\ve n$, and Lemma \ref{lem:k-unbalanced} gives the claim of the theorem with $\delta=\tfrac{1}{2}\ve$.
\end{proof}

The family ${\mathcal C}_0$ of graphs listed in the appendix (Figures \ref{fig:appendix1}--\ref{fig:appendix3}) has the following property: If $H\in {\mathcal C}_0$ and $C$ is the bipartite set of its vertices that are shown as full circles or full squares in Figures \ref{fig:appendix1}--\ref{fig:appendix3}, then $\l_{|C|}(H)\le1$. Lemma \ref{lem:increase imbalance converse} shows that the following is the common outcome for all of these graphs:

\begin{corollary}
\label{cor:appendix}
Suppose that\/ $H\in{\mathcal C}_0$ is one of the graphs depicted in Figures \ref{fig:appendix1}--\ref{fig:appendix3}. Let $C$ be the bipartite class of its vertices that are drawn as full circles or full squares. If a bipartite subcubic graph $G$ with bipartition $(A,B)$ contains $H$ as an induced subgraph, where $C\subseteq A$, and every vertex in $C$ has all its neighbors in $H$, then $C$ increases imbalance of the bipartition $(A,B)$ of $G$.
\end{corollary}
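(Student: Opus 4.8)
The plan is to obtain Corollary \ref{cor:appendix} as an immediate consequence of Lemma \ref{lem:increase imbalance converse}, feeding into it the spectral property of the family ${\mathcal C}_0$ recalled just above the statement. The only points to check are that the hypotheses of that lemma are met in the bipartite setting and that the graph $Q$ occurring in it coincides with $H$.

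First I would dispose of the structural hypotheses of Lemma \ref{lem:increase imbalance converse}. Since $G$ is bipartite with bipartition $(A,B)$ and $C\subseteq A$, every neighbor of a vertex of $C$ lies in $B$, so $N(C)\subseteq B$. Likewise, every vertex of $B$, and in particular every vertex of $N(C)$, has all of its neighbors in $A$, again simply because $(A,B)$ is a bipartition. Hence both hypotheses of the lemma hold for free.

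Next I would identify $Q=G(C\cup N(C))$ with $H$. By the assumption that every vertex of $C$ has all of its neighbors inside $H$, we have $N(C)=N_H(C)$; since $C$ is one of the two bipartite classes of $H$ and $H$ is connected, $N_H(C)$ is precisely the other bipartite class, so $C\cup N(C)=V(H)$. As $H$ is an \emph{induced} subgraph of $G$ and $C\cup N(C)=V(H)$, it follows that $Q=G(C\cup N(C))=H$. Therefore $\l_{|C|}(Q)=\l_{|C|}(H)\le 1$ by the stated property of ${\mathcal C}_0$, and Lemma \ref{lem:increase imbalance converse} yields that $C$ increases imbalance of $(A,B)$.

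I do not expect any genuine obstacle: the corollary is essentially a repackaging of Lemma \ref{lem:increase imbalance converse}. The one place where a little care is needed is the identification $Q=H$, which uses that $C$ is a full bipartite class of $H$ together with the connectedness of $H$; should some member of ${\mathcal C}_0$ fail to be connected, one argues componentwise, noting that having at most $|C_i|-1$ eigenvalues larger than $1$ in each component $H_i$ forces at most $|C|-1$ such eigenvalues in the disjoint union. The genuinely laborious ingredient --- which is not part of this argument but underlies it --- is the verification of $\l_{|C|}(H)\le 1$ for each of the finitely many graphs drawn in Figures \ref{fig:appendix1}--\ref{fig:appendix3}, a finite eigenvalue computation.
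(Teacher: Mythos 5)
Your proposal is correct and follows essentially the same route as the paper: identify the graph $Q$ of Lemma \ref{lem:increase imbalance converse} with $H$ itself (using that $H$ is induced, that $C$ is a full bipartite class, and that all neighbors of $C$ stay in $H$), then invoke the eigenvalue bound $\l_{|C|}(H)\le 1$ for the family ${\mathcal C}_0$. Your write-up just makes explicit the verifications the paper declares ``clear,'' including the harmless disconnected case.
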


\begin{proof}
The proof is clear by observing that the component $Q$ in $G(B\cup C)$ containing $C$ is equal to $H$ and by the remarks given in the paragraph before the corollary.
\end{proof}

We shall need some new concepts. Let $k\ge1$ be an integer. We say that vertices $x$ and $y$ of $G$ are \DEF{$k$-adjacent} in $G$ if there is a path of length $k$ joining them. If $H$ is a subgraph of $G$, a \DEF{$k$-chord} of $H$ in $G$ is a path $P=v_0v_1\dots v_k$ of length $k$, such that $P\cap H=\{v_0,v_k\}$. Having such a $k$-chord $P$, we say that $v_0$ and $v_k$ are \DEF{$k$-adjacent outside} $H$. The subgraph $H$ is \DEF{$k$-induced} in $G$ if it has no $l$-chords for $l=1,\dots,k$. Note that the special case when $k=1$ gives the usual notions of being adjacent, a chord, or an induced subgraph.

When dealing with vertices of degree 2 in the proof of Lemma \ref{lem:improve imbalance}, we will need the following result.

\begin{proposition}
\label{prop:path in the ball}
Let\/ $G$ be a bipartite graph of girth at least 6, let $v_0\in V(G)$, and let $r$ be a positive integer.
If $x,y\in \ball{r}$, then there exists a 2-induced path from $x$ to $y$ that is contained in $\ball{r+1}$.
\end{proposition}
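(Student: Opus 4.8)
The key starting observation is that a shortest path in $G$ is automatically $2$-induced. Indeed, such a path has no $1$-chord (a $1$-chord would be a shortcut), while a $2$-chord $u_iwu_j$ with $w$ off the path would give $d_G(u_i,u_j)\le 2$, hence (as $G$ is triangle-free) $d_G(u_i,u_j)=2$, and then $w$ and $u_{i+1}$ would be two distinct common neighbours of $u_i$ and $u_{i+2}$, i.e.\ a $4$-cycle, contradicting girth at least $6$. It is convenient to record the equivalent reformulation: \emph{a path $P$ is $2$-induced if and only if it is an induced path and no vertex of $V(G)\setminus V(P)$ has two neighbours on $P$} --- two such neighbours at consecutive positions of $P$ would create a triangle, and at non-consecutive positions a $2$-chord. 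In particular, if $d_G(x,y)\le 3$ there is nothing to do: a shortest $x$--$y$ path is $2$-induced, and each of its internal vertices is adjacent to $x$ or to $y$ and therefore lies in $\ball{r+1}$. So the whole difficulty is one of \emph{containment}: we must produce a (nearly) shortest path that stays inside $\ball{r+1}$.

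My plan is to let $P=u_0u_1\cdots u_m$ be a shortest $x$--$y$ path inside the induced subgraph $G(\ball{r+1})$ and, among all of these, one using as few vertices as possible at distance exactly $r+1$ from $v_0$. Since $P$ is shortest in $G(\ball{r+1})$ it is an induced path, so by the reformulation it remains only to exclude a vertex $w\notin V(P)$ having two neighbours $u_i,u_j$ ($i<j$) on $P$. Then $u_iwu_j$ is a detour of length $2$, so $j-i$ is even; the case $j-i=2$ is impossible, since it puts a $4$-cycle on $\{u_i,u_{i+1},u_{i+2},w\}$; and for $j-i\ge 4$ minimality of the length of $P$ forces $w\notin\ball{r+1}$, for otherwise replacing the subpath $u_i\cdots u_j$ by $u_iwu_j$ would shorten $P$. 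Since $w$ has a neighbour $u_i\in\ball{r+1}$, it follows that $w$ is at distance exactly $r+2$ from $v_0$, and consequently $u_i$ and $u_j$ both lie at distance exactly $r+1$ from $v_0$.

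It remains to rule out this last configuration, which I expect to be the real work. Here I would use that along a path every edge changes the distance to $v_0$ by exactly $1$ (bipartiteness plus the triangle inequality), so $u_i$ and $u_j$ are ``peaks'': the vertices $u_{i-1},u_{i+1},u_{j-1},u_{j+1}$ all lie at distance $r$ from $v_0$, and by girth $u_i$ (resp.\ $u_j$) is the \emph{unique} common neighbour of its two flanking vertices. The main handle is the number $d_G(u_{i+1},u_{j-1})$, which is even and, via the walk $u_{i+1}u_iwu_ju_{j-1}$, at most $4$. If it equals $2$, then unless $j=i+4$ the unique common neighbour of $u_{i+1}$ and $u_{j-1}$ lies off $P$, hence in $\ball{r+1}$, and splicing it into $P$ produces a strictly shorter path, a contradiction; so one is reduced to $j=i+4$, and there the configuration is rigid enough that one should be able to close it by finding either a short cycle or a cheaper path (note that $d_G(u_{i-1},u_{i+5})\le 4$, which in the same way forces $d_G(u_{i-1},u_{i+5})=4$, and one then argues about the possible length-$4$ paths between these vertices). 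If $d_G(u_{i+1},u_{j-1})=4$, then $j-i\ge 6$, since the subpath $u_{i+1}\cdots u_{j-1}$ already realizes this distance inside $G(\ball{r+1})$, and one is in a good position to relocate or shorten $P$. In all subcases the girth hypothesis --- in the form ``every pair of vertices has at most one common neighbour'' --- is the tool that makes things collapse, and I expect the stubborn point to be precisely $2$-chords whose two endpoints on $P$ are far apart, where this case analysis has to be pushed through carefully (possibly by a subsidiary induction on the length $m$ of $P$).
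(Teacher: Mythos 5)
Your reductions up to the point you flag as ``the real work'' are correct (shortest paths in $G$ are $2$-induced under girth $\ge 6$; a shortest path in $G(\ball{r+1})$ is induced; any surviving $2$-chord $u_iwu_j$ has $j-i\ge 4$ even, $w$ at distance exactly $r+2$ and $u_i,u_j$ at distance exactly $r+1$). But the configuration you are left with cannot be ruled out for the extremal path you chose, so this is a genuine gap rather than a case analysis waiting to be pushed through. Counterexample to your claimed extremal property: take $r=4$ and let $G$ consist of four paths $v_0a_1a_2a_3x$, $v_0b_1b_2b_3y$, $v_0c_1c_2c_3$, $v_0d_1d_2d_3$ meeting only at $v_0$, a path $xu_1u_2u_3u_4u_5y$, the edges $c_3u_2$ and $d_3u_4$, and a vertex $w$ adjacent to $u_1$ and $u_5$. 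This graph is bipartite of girth $6$, with $d(x)=d(y)=d(u_2)=d(u_4)=4$, $d(u_1)=d(u_3)=d(u_5)=5$, $d(w)=6$. The \emph{unique} shortest $x$--$y$ path in $G(\ball{5})=G-w$ is $xu_1u_2u_3u_4u_5y$ of length $6$ (every other path has length at least $8$), so your tie-breaking rule is vacuous; yet this path carries the $2$-chord $u_1wu_5$ and is therefore not $2$-induced in $G$. The proposition still holds here, of course: the longer path $xa_3a_2a_1v_0b_1b_2b_3y$ is $2$-induced and lies in $\ball{4}$. So ``shortest inside the ball'' is the wrong extremal criterion, and the sub-case you hoped to close ($j=i+4$, $d_G(u_{i-1},u_{j+1})=4$) genuinely occurs: the only length-$4$ path between $u_{i-1}$ and $u_{j+1}$ runs through $w\notin\ball{r+1}$, so there is nothing to splice in.

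The paper's proof goes the opposite way: it concatenates shortest paths from $x$ and from $y$ \emph{to $v_0$}, chosen with longest possible overlap (which already kills chords and pins down $2$-chords), and then, if a $2$-chord survives, reroutes through a single extremal $2$-chord and checks that the rerouted path is $2$-induced; that one rerouting step is exactly what forces the enlargement from $\ball{r}$ to $\ball{r+1}$. To salvage your framework you would likewise have to allow a modification step --- e.g.\ replace $u_i\cdots u_j$ by $u_iwu_j$, accept the excursion to distance $r+2$\dots{} except that this leaves $\ball{r+1}$, which the statement forbids --- so some version of ``route toward $v_0$ first, then repair once'' seems unavoidable.
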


\begin{proof}
For $u,v\in V(G)$, we will denote by $d(u,v)$ the distance in $G$ from $u$ to $v$.
Let $P_x$ and $P_y$ be shortest paths from $x$ and from $y$ to $v_0$, respectively. Choose these paths so that their intersection is a path $Q$, and $Q$ is as long as possible. Since $G$ has no cycles of length 4, both paths $P_x$ and $P_y$ are 2-induced. Let $z$ be the first vertex of intersection of $P_x$ and $P_y$ when traversing the paths in the direction towards $v_0$. Then $Q$ is a path from $z$ to $v_0$. The path $P_{xy}$ consisting of the segments of $P_x$ from $x$ to $z$ and of $P_y$ from $z$ to $y$ has length $l=d(x,v_0)+d(y,v_0)-2d(z,v_0)$. First, we claim that $P_{xy}$ is an induced path. Namely, the segments of the two paths are induced, so if there were a chord $uv$ of $P_{xy}$, then $u\in V(P_x)\setminus V(P_y)$ and $v\in V(P_y)\setminus V(P_x)$. Since $G$ is bipartite, we have $d(u,z)\ne d(v,z)$, and it is easy to see that this contradicts our choice of the paths with $Q$ being longest possible since we could replace one of the paths by a path using the edge $uv$ and thus increasing the intersection of the two paths.

If $P_{x,y}$ is not 2-induced, let $uwv$ be a 2-chord, where $u\in V(P_x)\setminus V(P_y)$ and $v\in V(P_y)\setminus V(P_x)$. Let us choose the 2-chord such that $d(u,z)$ is maximum possible. As before, the maximality of $Q$ shows that $d(u,z) = d(v,z)$. Let $P_{xy}'$ be the path from $x$ to $y$ obtained from $P_{xy}$ by using the 2-chord $uwv$ instead of the path from $u$ to $v$ in $P_{xy}$. It is clear from our choices that any chords or 2-chords of $P_{xy}'$ must use the vertex $w$. However, since $G$ has no 4-cycles, any such chord or 2-chord would give a contradiction to the maximality property of $Q$. Therefore, $P_{xy}'$ is 2-induced.

The conclusion from the above paragraph is that either $P_{xy}$ is 2-induced, or $P_{xy}'$ exists and is 2-induced. In each case we obtain the statement of the proposition.
\end{proof}

\begin{proof}[Proof of Lemma \ref{lem:improve imbalance}]
In the proof, we do not intend to optimize the distance from $v_0$ in which we are able to find a set that increases imbalance. Our main aim is to keep the proof simple.

Suppose that $G$ and its vertex $v_0$ give a counterexample to the theorem. We may assume that $G$ is connected. As before, we assume that $(A,B)$ is the bipartition of $G$.
We shall proceed through a sequence of claims, concerning vertices in vicinity of $v_0$. In each claim we will assume that the claim is false and then define certain vertex set $C$ (where $C\subseteq A$ or $C\subseteq B$). We let $Q_C=G(C\cup N(C))$. Observe that $Q_C$ is the subgraph $Q$ that appears in Lemmas \ref{lem:increase imbalance} and \ref{lem:increase imbalance converse} about increasing imbalance.

\medskip

\emph{Claim 1: Each vertex in $\ball{17}$ has degree at least 2.}
If $v$ has degree at most 1, then $C=\{v\}$ increases imbalance since $\l_1(G(B\cup\{v\}))\le1$ and $\l_1(G(A\cup\{v\}))\le1$.

\medskip

\emph{Claim 2: $\ball{16}$ contains no 4-cycles.}
Suppose that $D=v_1v_2v_3v_4$ is a 4-cycle in $\ball{16}$. For $i=1,\dots,4$, let $u_i$ be the neighbor of $v_i$ that is not in $V(D)$ (if $\deg(v_i)=2$, then we set $u_i=v_i$).
If $u_1\ne u_3$, then let $C=\{v_1,v_3\}$. We may assume that $C\subseteq A$. Clearly, $Q_C = G(C\cup N(C))$ is isomorphic to the graph $C_4^+$ depicted in Figure \ref{fig:appendix1} (or to an induced subgraph of $C_4^+$ if $u_1=v_1$ or $u_3=v_3$). Corollary \ref{cor:appendix} shows that $C$ increases imbalance of $(A,B)$. Similarly, if $u_2\ne u_4$, we may take $C=\{v_2,v_4\}$ and $C$ increases imbalance of $(B,A)$. Finally, assume that $u_1=u_3$ and $u_2=u_4$. Let us now consider the 4-cycle $v_1v_2v_3u_1$. Again, if $u_1$ and $v_2$ have no common neighbor outside this cycle, we are done by taking $C=\{u_1,v_2\}$. (Note that $C\subseteq \ball{17}$ since $V(D)\subseteq \ball{16}$.) If they have such a common neighbor, this must be $u_2$, and hence $G=K_{3,3}$. In this case, $C=\{v_1,v_3,u_2\}$ increases imbalance.

\medskip

\emph{Claim 3: If $\ball{13}$ contains a vertex of degree 2, then $\ball{16}$ contains precisely two vertices of degree 2 and they are adjacent to each other.}
Let us first prove that $\ball{16}$ contains at most two vertices of degree 2, and if there are two, one of them is in $A$ and the other one is in $B$. To see this, suppose that $u,v\in A\cap\ball{16}$ have degree 2 and $u\ne v$. By Proposition \ref{prop:path in the ball}, there exists a 2-induced path $P$ from $u$ to $v$ in $\ball{17}$. Let $C=V(P)\cap A$. Since $P$ is 2-induced and $u,v$ have degree 2, the graph $Q_C$ is isomorphic to the graph $\widehat{P}_{2t+1}$ shown in Figure \ref{fig:appendix1}, where the horizontal path shown at the bottom of the drawing is $P$ and $2t$ is its length. Since $P\subseteq \ball{17}$, Corollary \ref{cor:appendix} completes the proof.

Suppose now that $\deg(v)=2$, where $v\in \ball{13}$. Suppose first that $v$ does not belong to a cycle of length 6. Let $v_1,v_1'$ be the neighbors of $v$, and let $v_2$ ($v_2'$) be a neighbor of $v_1$ ($v_1'$) that is different from $v$. Finally, let $C = \{v,v_2,v_2'\}\subset \ball{15}$. Let $Q=Q_C$. We claim that $Q$ is isomorphic to the graph $\widehat{P}_7^-$ depicted in Figure \ref{fig:appendix1} (since $\deg(v_2)=\deg(v_2')=3$). To see this, we have to prove that vertices in $C\cup N(C)$ are distinct and non-adjacent, apart from their adjacencies in $P_7^-$. Clearly, $v$ is at distance at most 3 from all vertices in $Q$. If two of them were adjacent or the same (apart from adjacencies in $P_7^-$), then we would obtain a cycle of length at most 7 containing $v$. Since $G$ is bipartite and $v$ does not belong to a cycle of length 4 or 6, this is not possible. This proves the claim. Now, we are done by Corollary \ref{cor:appendix}.

Finally, let $R=vv_1v_2v_3v_4v_5$ be a 6-cycle containing $v$. As shown above, we may assume that $\deg(v_2) = \deg(v_4) = 3$.
By symmetry, we may also assume that $\deg(v_1)=3$. It suffices to prove that $\deg(v_5)=2$. Suppose for a contradiction that $\deg(v_5)=3$. For $i\in\{1,2,4,5\}$, let $u_i$ be the neighbor of $v_i$ that is not in $V(R)$. In the preceding paragraph we proved that taking the set $C = \{v,v_2,v_4\}$ gives the outcome of the theorem unless $v_2$ and $v_4$ are 2-adjacent (which in turn gave rise to the 6-cycle $R$). The same argument can be repeated on the sets $\{v,v_2,u_5\}$, $\{v,u_1,v_4\}$, and $\{v,u_1,u_5\}$. They show that $u_2$ is the common neighbor of $v_2$ and $u_5$, $u_4$ is the common neighbor of $u_1$ and $v_4$, and that $u_1$ and $u_5$ have a common neighbor $w\notin V(R)\cup \{u_2,u_4\}$. (Here we used the fact that there are no 4-cycles in $\ball{16}$ and that all treated vertices are in $\ball{16}$.)
If the subgraph $S$ of $G$ induced on the vertex set $V(R)\cup \{u_1,u_2,u_4,u_5,w\}$ is 2-induced, then it is thick and we are done by Lemma \ref{lem:thick unbalanced}. Therefore, two of its vertices are 2-adjacent. The only pairs that could be 2-adjacent outside $S$ without creating a 4-cycle are $u_2,u_4$ and $v_3,w$. Both of these give isomorphic outcomes since $v_3$ and $w$ would have played the role of $u_2$ and $u_4$ if taking the 6-cycle $vv_1v_2u_2u_5v_5v$ instead of $R$. Thus, we may assume that $u_2zu_4$ is a 2-chord.
Let $C = \{v,v_2,v_4,u_1,u_5,z\}$. Observe that $C\subset \ball{15}$ since a neighbor of $v$ belongs to $\ball{12}$. Then $Q_C=G(C\cup N(C))$ is isomorphic to the graph $\widehat{C}_6$ shown in Figure \ref{fig:appendix1}. Thus, we obtain the outcome of the theorem by Corollary \ref{cor:appendix}. This completes the proof of Claim 3.

\medskip

\emph{Claim 3A. If $z_1\in\ball{13}$ and $z_2\in\ball{14}$ are adjacent degree-2 vertices of $G$, then they are contained in a 6-cycle $R=z_1v_1v_2v_3v_4z_2\subset\ball{15}$ and there are vertices $u_1,u_4\notin V(R)$ such that $R' = u_1v_1v_2v_3v_4u_4\subset\ball{15}$ is also a 6-cycle in $G$.}
We shall use the notation used in the last part of the proof of Claim 3, except that we now assume that $z_1=v$ and $z_2=v_5$ both have degree 2. Considering the set $C = \{z_1,u_1,v_4\}$, we conclude that $u_1$ is adjacent to $u_4$. This gives rise to $R'$. Since a neighbor of $z_1$ belongs to $\ball{12}$, we conclude that $R\cup R'\subset \ball{15}$.

\medskip

An induced $2t$-cycle $D=v_1v_2\dots v_{2t}$ in $G$, in which either no two vertices in the set $C=\{v_1,v_3,v_5,\dots,v_{2t-1}\}$ are 2-adjacent outside $D$, or no two vertices in the set $C'=\{v_2,v_4,v_6,\dots,v_{2t}\}$ are 2-adjacent outside $D$, is called a \DEF{good $2t$-cycle}.

\medskip

\emph{Claim 4. $\ball{17}$ contains neither good $8$-cycles nor good $12$-cycles.}
Suppose not. Let $C$ and $C'$ be the vertex sets of a good 8- or 12-cycle from the definition of good cycles. It follows that either $Q_C$ or $Q_{C'}$ is isomorphic to the graph $C_8^+$ or $C_{12}^+$ in Figure \ref{fig:appendix1} (if some of the vertices on the cycle were of degree 2, $Q_C$ or $Q_{C'}$ could be subgraph of one of these missing some of the degree-1 vertices). By Corollary \ref{cor:appendix}, $C$ or $C'$ increases imbalance of either $(A,B)$ or $(B,A)$.
This proves the claim.

\medskip

Since $\ball{16}$ has no 4-cycles, every 8-cycle in $\ball{16}$ is induced. By using Claim 4, it is easy to see that every 8-cycle can be written as $D=v_1v_2\dots v_8$, where $v_1$ and $v_5$ are 2-adjacent, and $v_2$ and $v_6$ are 2-adjacent. We shall denote such a subgraph by $H_0$ (see Figure \ref{fig:H_0}), and we shall later prove that every copy of $H_0$ in a vicinity of $v_0$ is 2-induced in $G$ (see Claim 8). Prior to that, we need some further properties.

\begin{figure}[htb]
   \centering
   \includegraphics[width=8.5cm]{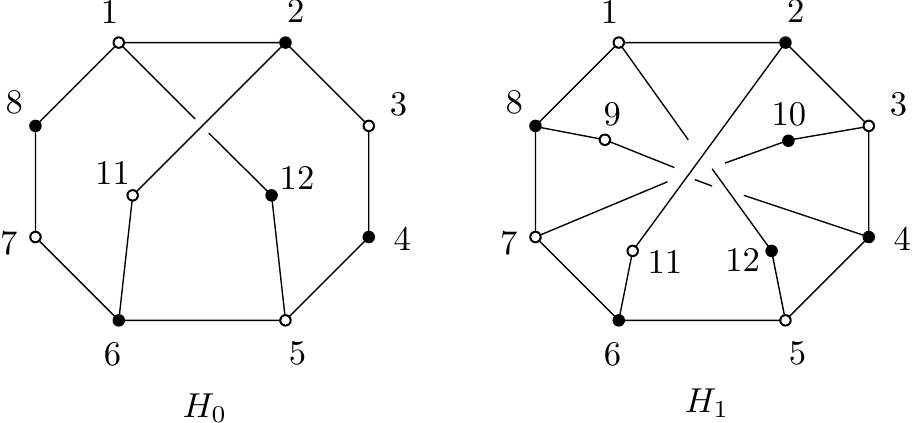}
   \caption{The 2-induced closure of an 8-cycle}
   \label{fig:H_0}
\end{figure}

\medskip

\emph{Claim 5. Every vertex in $\ball{12}$ is contained in a cycle of length 6.}
Suppose $v$ is not contained in a 6-cycle. Since $v\in \ball{12}$, Claims 3 and 3A imply Claim 5 for vertices of degree 2 and their neighbors. Therefore, we may assume that $v$ and its neighbors are of degree 3. Let $C$ be the set consisting of $v$ and the six vertices that are at distance 2 from $v$. Then $Q_C$ is isomorphic to the tree $B_3$ shown in Figure \ref{fig:appendix1} (or its induced subgraph with some of the leaves missing). By Corollary \ref{cor:appendix}, $C$ increases imbalance of either $(A,B)$ or $(B,A)$.

\medskip

We say that an edge of $G$ is \DEF{internal} if it is contained in a 6-cycle in $G$, and is \DEF{external}, otherwise. Claim 5 implies that the set of external edges in the vicinity of $v_0$ form a matching in $G$. By the remarks stated after Claim 4, every 8-cycle in $\ball{17}$ is contained in a copy of the graph $H_0$, which can be written as the union of three 6-cycles. This implies that every edge in the 8-cycle is internal. Thus, external edges cannot belong to cycles of length less than 10. This fact will be used repeatedly in the proof of the next claim which shows that every 6-cycle is incident with at most one external edge.

\begin{figure}[htb]
   \centering
   \includegraphics[width=8.3cm]{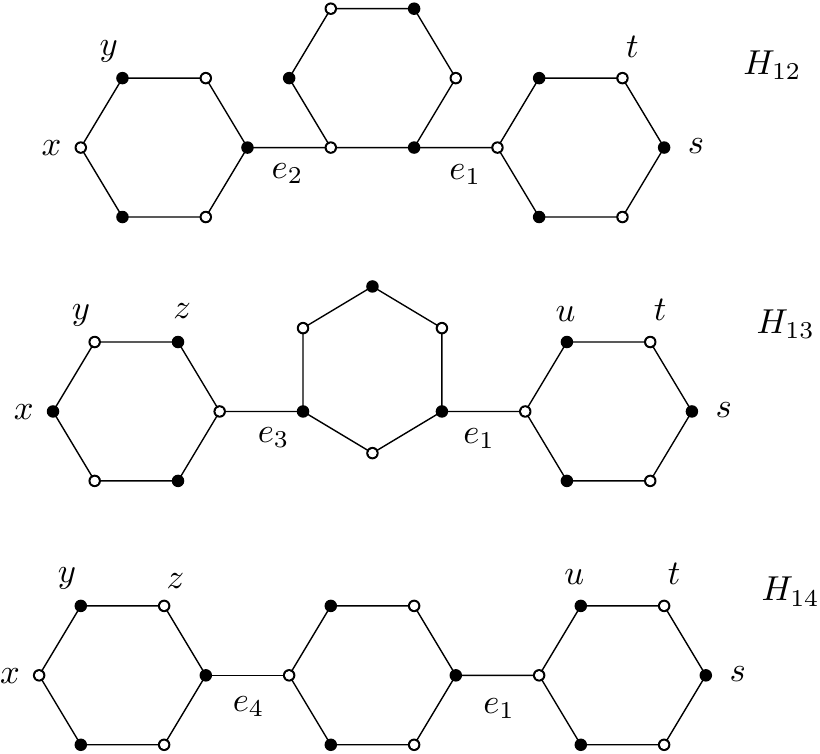}
   \caption{Two external edges at a hexagon}
   \label{fig:external edges}
\end{figure}

\medskip

\emph{Claim 6. Every 6-cycle in $\ball{11}$ is incident with at most one external edge.}
Let $D=v_1v_2\dots v_6$ be a 6-cycle with two or more external edges.
For $i=1,\dots,6$, let $e_i=v_iu_i$ be the edge incident with $v_i$ that is not on the cycle $D$. (We set $e_i=v_i$ if $\deg(v_i)=2$, and we say that $e_i$ is \DEF{internal} in such a case.) Since $u_i\in\ball{12}$, Claim 5 shows that there is a 6-cycle $D_i$ through $u_i$. If $e_i$ is external, the two 6-cycles $D$ and $D_i$ induce a subgraph $H_i$ of $G$ that has only the edge $e_i$ in addition to the two cycles. If there were other adjacencies between $D$ and $D_i$, then $e_i$ would be contained in a cycle of length less than 10 (and would also be contained in $\ball{15}$), which is excluded as argued above. The same argument shows that $H_i$ is 2-induced.

We may assume that $e_1$ is external, and we shall distinguish three cases, depending on whether $e_j$, $j=2,3,4$, is another external edge. Let $H_{1j}$ be the subgraph of $G$ induced on $H_1\cup H_j$. Figure \ref{fig:external edges} shows graphs $H_1\cup H_j$ for $j=2,3,4$. Let us first observe that all vertices shown in Figure \ref{fig:external edges} are distinct in each of the three cases since any identification would give a cycle of length at most 8 through $e_1$. (This is clear for $H_{12}$; similarly in $H_{13}$ and $H_{14}$ where we have to observe that possible identification of $x$ and $s$ in $H_{13}$ or identification of $x$ and $t$ in $H_{14}$ -- or cases symmetric to these -- would force another identification of a neighbor of $x$ and thus yielding a cycle of length at most 8 containing $e_1$.) Let $C$ be the set of vertices of $H_{1j}$ that are depicted as black vertices in Figure \ref{fig:external edges}. Since $D\subseteq \ball{11}$, we have that $C\subseteq \ball{15}$.

Let us first suppose that two of the vertices in $C$ have a common neighbor outside $C$. The only possibilities (up to symmetries) that do not yield a cycle of length at most 8 through $e_1$ are the following:
\begin{enumerate}
\item[(a)]
$s$ and $y$ in $H_{12}$: In this case, there is a good 12-cycle in $\ball{16}$ using the 2-chord from $s$ to $y$, the path from $y$ to $e_2$ that passes through $x$, and the path from $e_2$ to $s$.
\item[(b)]
$s$ and $x$ in $H_{13}$: This case gives a good 12-cycle in $\ball{16}$.
\item[(c)]
$s$ and $z$ in $H_{13}$: This case also gives a good 12-cycle (by using the path $v_3v_4v_5v_6v_1$ through $D$) since any 2-adjacency of white vertices on that 12-cycle would yield a short cycle through $e_1$ or through $e_3$.
\item[(d)]
$s$ and $y$ or $u$ and $y$ in $H_{14}$: Both cases give a good 12-cycle.
\end{enumerate}
Thus we may assume that there are no additional 2-adjacencies between vertices in $C$. If $H_{1j}$ is as shown in Figure \ref{fig:external edges}, i.e., the subgraph shown is induced in $G$, the subgraph $Q_C$ is isomorphic to one of the graphs $\widehat{H}_{12}$, $\widehat{H}_{13}$, and $\widehat{H}_{14}$, shown in Figure \ref{fig:appendix2} in the appendix (with possibly some degree-1 vertices missing if $H_{1j}$ contains vertices whose degree in $G$ is 2). By Corollary \ref{cor:appendix}, $C$ increases imbalance. Thus, we may assume that $H_{1j}$ is not induced. The only possibilities for additional edges (up to symmetries) that do not yield a cycle of length at most 8 through $e_1$ are the following:
\begin{enumerate}
\item[(e)]
The edge $sx$ in $H_{12}$: In this case, $Q_C$ is isomorphic to the graph $\widehat{H}_{12}'$ from the appendix (Figure \ref{fig:appendix2}), and we are done by Corollary \ref{cor:appendix}.
\item[(f)]
The edge $sy$ in $H_{13}$ or the edge $ty$ in $H_{14}$: Each of these cases gives rise to a good 12-cycle (using the path through the vertex $x$).
\item[(g)]
The edge $sx$ in $H_{14}$: This case also gives a good 12-cycle.
\item[(h)]
The edge $sz$ in $H_{14}$:
In this case, $Q_C$ is isomorphic to the graph $\widehat{H}_{12}'$ shown in Figure \ref{fig:appendix2} (where $D$ can be any of the bottom two hexagons), and we are done by Corollary \ref{cor:appendix}.
\end{enumerate}
This completes the proof of Claim 6.

\medskip

\emph{Claim 7. Let $v\in\ball{11}$. If every vertex at distance at most 2 from $v$ has degree 3, then $G$ contains an 8-cycle that has a vertex at distance at most 2 from $v$.}
Let $D=v_1v_2\dots v_6$ be a 6-cycle that contains $v$. By Claim 3A, all vertices in $D$ and all vertices adjacent to $D$ have degree 3. Let $e_i=v_iu_i$ be the edge incident with $v_i$ that is not contained in $D$. By Claim 6, at least five of the edges $e_1,\dots,e_6$ are internal. Each 6-cycle $D'$ containing $e_i$ must use another internal edge $e_j$. If $j=i+2$ or $j=i-2$ (values modulo 6), then $D\cup D'$ contains an 8-cycle, where $v$ is either on the cycle or adjacent to it. Suppose now that $j=i+3$. We may assume that $i=1$ and $j=4$ and that $D'=v_1\dots v_4 v_5'v_6'$. Since any two vertices in $D\cup D'$ lie on a common 6-cycle, at most one of these vertices is incident with an external edge. We may assume that this vertex, if it exits, is $v_6'$. Note that all vertices in $D\cup D'$ have degree 3. Define the edges $e_5',e_6'\notin E(D')$ that are incident with $v_5'$ and $v_6'$, respectively. If a 6-cycle $D''$ through $e_2$ uses any of the edges $e_5,e_6,e_5',e_6'$, then the union $D\cup D'\cup D''$ contains an 8-cycle that is at distance at most 2 from $v$. By symmetry, a similar conclusion holds for all other internal edges leaving $D\cup D'$. Thus, we may assume that the 6-cycle through $e_2$ returns through $e_3$, the cycle through $e_5$ returns through $e_6$, and the 6-cycle leaving $v_5'$ returns through $v_6'$. Denote these 6-cycles by $F_{23},F_{56},F_{56}'$, respectively. Note that any two of these 6-cycles together with a 6-cycle in $D\cup D'$ form a subgraph of $G$ that is isomorphic to the graph $L_3$ shown in Figure \ref{fig:hexagons through internal edges}. It is easy to see that if $L_3$ is not 2-induced, then it contains an 8-cycle that passes through one of the vertices marked $x$ and $y$ in Figure \ref{fig:hexagons through internal edges}. Since $x$ and $y$ are at distance at most 2 from $v$, we get the claim. Thus, we may assume that each of these subgraphs is 2-induced, which implies that also the graph $L_{3,3}$ (see Figure \ref{fig:hexagons through internal edges}) consisting of $D\cup D'$ together with $F_{23},F_{56},F_{56}'$ is 2-induced in $G$. Let $C = V(L_{3,3})\cap A$ and note that $C\subseteq \ball{16}$. Since $L_{3,3}$ is 2-induced in $G$, the subgraph $Q_C$ is isomorphic to the graph $\widehat L_{3,3}$ shown in Figure \ref{fig:appendix3}, and we are done by applying Corollary \ref{cor:appendix}.

\begin{figure}[htb]
   \centering
   \includegraphics[width=11.7cm]{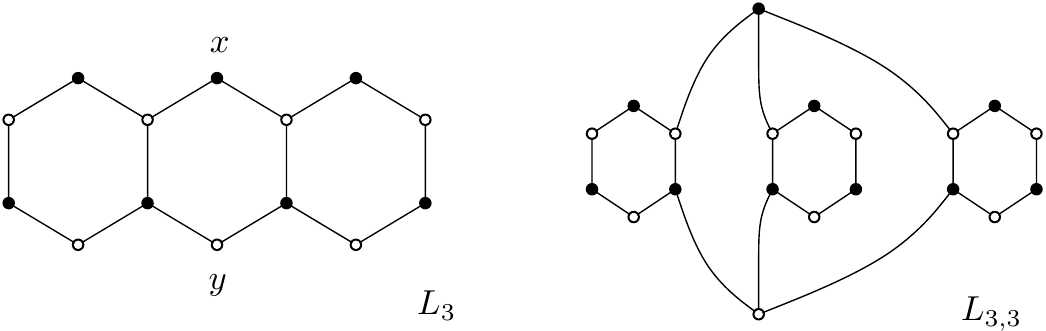}
   \caption{A case of 6-cycles sharing three edges}
   \label{fig:hexagons through internal edges}
\end{figure}

The last case to consider is when every 6-cycle using one of the edges $e_i$ returns to $D$ either through $e_{i-1}$ or through $e_{i+1}$. Suppose first that a 6-cycle $D_1$ through $e_1$ and a 6-cycle $D_3$ through $e_3$ both return through $e_2$. If the union $R=D\cup D_1\cup D_3$ is an $A$-thick (respectively $B$-thick) subgraph of $G$, then either $C=V(R)\cap B$ (resp.\ $C=V(R)\cap A$) gives an imbalance-increasing vertex set in $\ball{16}$ (see Lemma \ref{lem:thick unbalanced}). Therefore, either two vertices in $A$ or two vertices in $B$ are 2-adjacent. The corresponding 2-chord gives rise to two 6-cycles sharing a path of length 2 or 3, which is the case we have already treated above. Even though one of the 6-cycles $D_1$ or $D_3$ may play the role of $D$ in this case, it is still true that a resulting 8-cycle is at distance at most 2 from $v$.

By symmetry, we may now assume that there are precisely three 6-cycles using internal edges $e_1,\dots, e_6$. We may assume that the 6-cycles are $F_{12},F_{34},F_{56}$, where $F_{ij}$ uses the edges $e_i$ and $e_j$. If a vertex $x\in V(F_{12})\setminus V(D)$ and a vertex $y\in V(F_{34})\setminus V(D)$ are either the same, adjacent, or 2-adjacent in $G$, then we obtain two 6-cycles sharing a path of length 2 or 3, which is the case we have already treated above. In that case we get an imbalance-increasing set in $\ball{17}$ or an 8-cycle at distance at most 2 from $v$. Since the same can be said for the other pairs of the 6-cycles $F_{ij}$, we conclude that the graph $R=D\cup F_{12}\cup F_{34}\cup F_{56}$ is 2-induced in $G$. We set $C=V(R)\cap A$ and observe that $Q_C$ is isomorphic to the graph $\widehat{H}_{123}$ shown in Figure \ref{fig:6}. Again, Corollary \ref{cor:appendix} applies. This completes the proof of Claim 7.

\medskip

Let $H_0$ and $H_1$ be the graphs depicted in Figure \ref{fig:H_0}, and let $H_2$, $H_3$, and $H_4$ be the graphs in Figure \ref{fig:H_2 and H_3}. Observe that $H_2$ and $H_3$ are both isomorphic to the Heawood graph with one edge removed and that their subgraphs induced on vertices $1,2,\dots,12$ are isomorphic to $H_1$.

\medskip

\emph{Claim 8. Let $D$ be an 8-cycle in $\ball{10}$. Then $D$ is contained in a 2-induced subgraph isomorphic to the graph $H_0$.}
We have already seen that $D$ must be contained in a subgraph $H$ of $G$ that is isomorphic to $H_0$. This subgraph is induced in $G$ since any chord in $H_0$ gives rise to a cycle of length 4. It remains to see that $H$ is 2-induced.
We will use the notation provided in Figures \ref{fig:H_0} and \ref{fig:H_2 and H_3}, so the vertices of $H$ and other graphs isomorphic to one of $H_0,\dots,H_4$ will be denoted by the integers $1,2,\dots,16$ as shown in the figures.

\begin{figure}[htb]
   \centering
   \includegraphics[width=12cm]{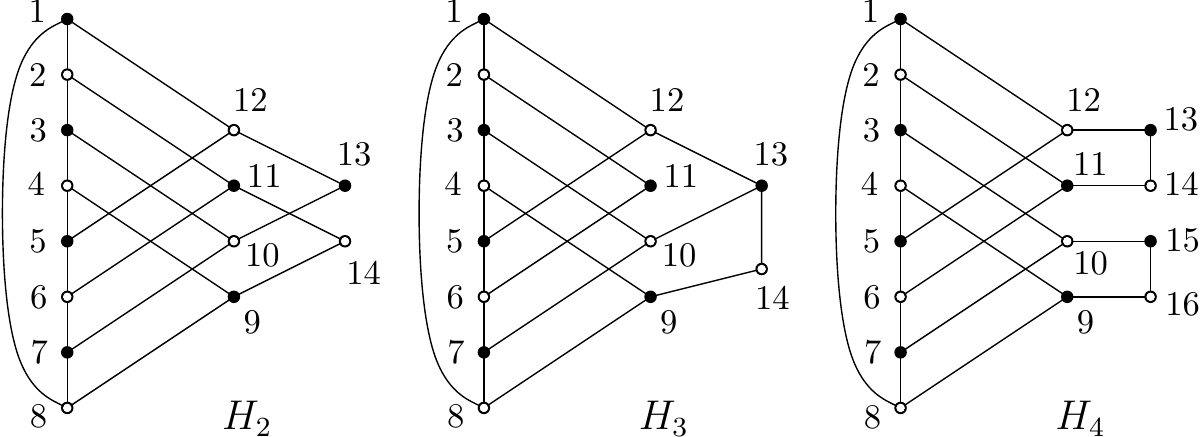}
   \caption{The graphs $H_2$, $H_3$, and $H_4$}
   \label{fig:H_2 and H_3}
\end{figure}

Suppose that $H$ is not 2-induced. Note that any 2-chord starting at vertices 11 or 12 would give rise to a 4-cycle in $G$. Thus, we may assume, by symmetry, that we have a 2-chord 3-10-7. Since this 2-chord is part of another copy of $H_0$, the subgraph obtained from $H$ by adding the 2-chord is induced in $G$. By Lemma \ref{lem:thick unbalanced}, this subgraph cannot be thick, so there is a 2-chord joining two of the vertices 4,8,10,12. The only two pairs not giving a 4-cycle are 4-8 and 10-12. They give rise to isomorphic graphs, so we may assume the 2-chord is 4-9-8. Thus we have a subgraph $H'$ isomorphic to the graph $H_1$ (shown in Figure \ref{fig:H_0}). This subgraph is induced in $G$ since any two vertices in different bipartite classes belong to a common copy of $H_0$ inside $H'$.

Suppose first that $H'$ is not 2-induced. By symmetry between the possible 2-adjacent pairs 9-11 and 10-12, we may assume that 10-13-12 is a 2-chord. By Lemma \ref{lem:thick unbalanced}, two of the vertices 9,11,13 are 2-adjacent. If these are 9 and 11, we obtain a copy of $H_2$. Otherwise, by symmetry, we have a 2-chord 9-14-13 which yields a copy of $H_3$. Let $x$ and $y$ be the degree-2 vertices in the obtained subgraph. If they were adjacent, we would get the Heawood graph, so both graphs are isomorphic to the Heawood graph minus an edge, and thus we may assume henceforth that we have $H_2$. By symmetry between $x$ and $y$ and by symmetry of the bipartite classes $A$ and $B$, we may assume that $x=13\in A$ and that $y=14\in B$. By Claim 3, $x$ and $y$ have degree 3 in $G$ and there are two edges $e=xx'$ and $f=yy'$ going out of $H_2$ in $G$. Since the distance in $H_2$ between $x$ and $y$ is five, both $e$ and $f$ are external edges. Observe that either $x'$ or $y'$ belongs to $\ball{7}$.
This is clear if $v_0\in V(H_2)$. Otherwise, one of these vertices is on a shortest path from $v_0$ to $D$, giving the same outcome. We may assume that $x'\in \ball{7}$.
Let $D'\subset \ball{10}$ be a 6-cycle through the vertex $x'$, and let $C=A\cap (V(H_2)\cup D') \setminus \{1,7,9\}$. Then the subgraph $Q_C$ of $G$ is isomorphic to the graph $\widehat{H}_2^\circ$ shown in Figure \ref{fig:appendix1}. (To check this, note that the 8-cycle in the figure is 2-3-10-13-12-5-6-11-2, where the vertex 11 is adjacent to the vertex $y=14$ having degree 1. The only non-obvious possibility for $Q_C$ being different is that a vertex in $D'$ would be equal to $y'$ and hence adjacent to $y$. However, in that case, $D'$ would be incident with two external edges $e$ and $f$, contradicting Claim 6.)
Now, Corollary \ref{cor:appendix} shows that $C\subset \ball{11}$ increases imbalance.

From now on, we may assume that $H'$ is 2-induced. For vertices $i\in \{9,\dots,12\}$, let $e_i$ be the edge in $G$ incident with vertex $i$ but not contained in $H'$. Note that for any $i,j\in\{9,\dots,12\}$ of different parities, vertices $i$ and $j$ lie on a common 6-cycle in $H'$. Since $D\subset\ball{10}$, we have $H'\subset \ball{11}$. By Claim 6, at most one of the edges $e_9,e_{10}$ is external. By symmetry, we may assume that $e_9$ is internal. Since $H'$ is 2-induced and the distance between 9 and 11 in $H'$ is four, $e_9$ and $e_{11}$ cannot lie on a common 6-cycle. Thus, we may assume (by symmetry between $e_{10}$ and $e_{12}$) that a 6-cycle $D'$ through $e_9$ uses the edge $e_{10}$. In that case, $D'$ is the cycle 9-8-7-10-15-16-9, where 15 and 16 are new vertices. If a 6-cycle through $e_{11}$ uses $e_{10}$, then the cycle is 11-14-15-10-7-6-11 (14 being a new vertex). In this case we obtain a good 8-cycle 11-14-15-16-9-8-1-2-11. Similarly, if a 6-cycle through $e_{12}$ uses $e_9$. Since $e_{11}$ and $e_{12}$ are incident with the same 6-cycle, they cannot be both external by Claim 6. The only possibility for a 6-cycle (excluding previously treated cases) is the 6-cycle 12-13-14-11-6-5-12 (or 12-13-14-11-2-1-12) where 13 and 14 are new vertices. This gives us the graph $H_4$ shown in Figure \ref{fig:H_2 and H_3}. The subgraph $H_4$ is induced in $G$ (or we get a case treated above). If it is not 2-induced, we may assume that we have the 2-chord 13-17-15, and in this case we obtain a good 8-cycle: 12-13-17-15-16-9-8-1-12. Therefore $H_4$ is 2-induced. Now, we take $C=A\cap V(H_4)\setminus\{1\}$. The subgraph $Q_C$ is isomorphic to the graph $\widehat{H}_4^-$ in Figure \ref{fig:appendix1}. Corollary \ref{cor:appendix} shows that $C$ increases imbalance. This exhausts all possibilities and completes the proof of Claim 8.

\medskip

We now define a vertex $\hat v_0$ as follows. If every vertex in $\ball{2}$ is of degree 3 in $G$, then we take $\hat v_0=v_0$. Otherwise, Claim 3A shows that there exists a vertex $\hat v_0\in \ball{3}$ such that all vertices at distance at most 2 from it have degree 3. Claim 7 shows that there exists an 8-cycle $D$ at distance at most 2 from $\hat v_0$. By Claim 8, $D$ is contained in a 2-induced subgraph $H_0$ of $G$, where $H_0$ is depicted in Figure \ref{fig:H_0}. This gives the next conclusion.

\medskip

\emph{Claim 9. There is a 2-induced subgraph of $G$ isomorphic to $H_0$ that contains a vertex in $\ball{4}$.}

\medskip

We shall now fix the subgraph of Claim 9, call it $H_0$ and denote its vertices by integers as in the figure.

\medskip

\emph{Claim 10. All vertices in $H_0$ have degree 3 in $G$, and for $t\le5$, there is no $t$-chord joining a vertex in $\{3,4\}$ with a vertex in $\{7,8\}$.}
By Claim 3, vertices 11 and 12 cannot be of degree 2. If another vertex, which we may assume is the vertex 3, is of degree 2, then let $C=\{1,3,5,7\}$. Since $H_0$ is 2-induced, the corresponding subgraph $Q_C$ is isomorphic to the graph $\widehat{H}_0^=$ from Figure \ref{fig:appendix1}, and we are done by Corollary \ref{cor:appendix}. This proves the first part of the claim.

Suppose next that there is a $t$-chord joining vertex 3 with $\{7,8\}$. By Claim 8, we have $t\ge 3$. Now it is an easy task to verify that the $t$-chord together with a path in $H_0$ gives rise to a good 8-cycle in $G$. This completes the proof of Claim 10.

\medskip

If $i$ is a vertex of degree 2 in $H_0$, then we will denote by $e_i$ the edge in $E(G)\setminus E(H_0)$ that is incident with the vertex $i$. If there is a 6-cycle that contains two of such edges, $e_i$ and $e_j$, then we say that $e_i$ and $e_j$ are \DEF{coupled}. If the corresponding vertices $i$ and $j$ are at distance $t$ in $H_0$, then there is a $(6-t)$-chord $P_{i,j}$ of $H_0$ containing $e_i$ and $e_j$. Since $H_0$ is 2-induced, we know that $t\le3$ in such a case.

\medskip

\emph{Claim 11. If $e_3$ is coupled with $e_{12}$, then the edge $e_4$ is internal.}
Suppose that the path $P_{3,12}$ is 3-13-14-12 (where 13 and 14 are new vertices) and suppose that $e_4=4a$ is an external edge. Since $H_0$ has a vertex in $\ball{4}$, the vertex $a$ lies in $\ball{9}$. By Claim 5, there is a 6-cycle $R=abcdega$ through $a$. Since the edge $4a$ is external, it cannot be contained in a cycle of length less than 10 (see the comment stated before Claim 6). This implies that the cycle $R$ is disjoint from $H_0\cup P_{3,12}$ and its only vertex that is incident to $H_0\cup P_{3,12}$ is $a$. Moreover, none of its vertices is 2-adjacent to a vertex in $H_0\cup P_{3,12}$, except possibly the vertex $d$ that could be 2-adjacent to the vertex 8. Let $C=\{2,4,6,12,13,b,d,g\}$. Then $Q_C$ is isomorphic to the graph $\widehat N_0$ shown in Figure \ref{fig:appendix1}. Now the proof is complete by Corollary \ref{cor:appendix}.

\medskip

\emph{Claim 12. If $e_3$ is coupled with $e_4$ and $P_{3,4}=3abcd4$, then the subgraph $H_5$ induced on $V(H_0\cup P_{3,4})$ is either equal to $H_0\cup P_{3,4}$ or is equal to $H_0\cup P_{3,4}$ together with precisely one of the edges $11c$ or $12b$.}
By Claim 10 and since $G$ has no 4-cycles, the only possible edges of $H_5$ in addition to the edges in $H_0\cup P_{3,4}$ are the two edges $11c$ and $12b$. Thus it remains to see that both of them cannot be present. Suppose, for a contradiction, that they are. By Claim 6 and symmetry, we may assume that the edge $e_a$ leaving $P_{3,4}$ at the vertex $a$ is internal. It is easy to see that its supporting 6-cycle must return to $H_0\cup P_{3,4}$ through the vertex $d$. Let $aefd$ be the corresponding 3-chord. Let $C=\{2,4,6,8,12,a,c,f\}$. By using Claim 10 it is easy to see that the corresponding subgraph $Q_C$ is isomorphic to the graph $\widehat H_5$ in Figure \ref{fig:appendix3}, and we are done by Corollary \ref{cor:appendix}.

\medskip

\begin{figure}[htb]
   \centering
   \includegraphics[width=6.5cm]{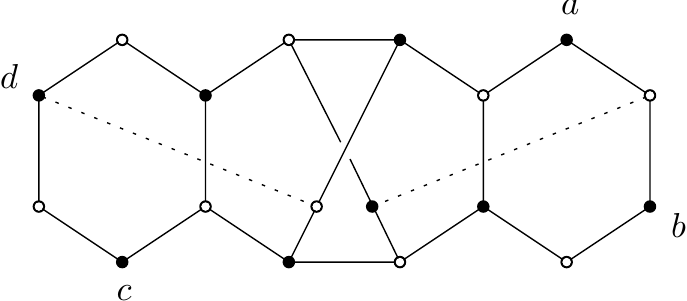}
   \caption{The graphs $H_6^0$, $H_6^1$, and $H_6^2$}
   \label{fig:H_6}
\end{figure}

\emph{Claim 13. If $e_3$ is coupled with $e_4$, then $e_7$ is not coupled with $e_8$.}
Suppose for a contradiction that $e_3$ is coupled with $e_4$ and that $e_7$ is coupled with $e_8$. Let us consider the graph $H_6 = H_0\cup P_{3,4}\cup P_{7,8}$. By Claims 10 and 12, this subgraph is isomorphic to the graph shown in Figure \ref{fig:H_6}, where each of the two dotted edges may or may not be present. If just one of these two edges is present, then we may assume that this is the edge incident with the vertex $d$. By Claim 10, the vertices $a,b,c,d$ cannot be 2-adjacent outside this subgraph, except possibly for $b$ and $d$. Let $C = \{2,4,6,8,12,a,b,c,d\}$. Then $Q_C$ is either isomorphic to one of the graphs $\widehat{H}_6^0$, $\widehat{H}_6^1$, $\widehat{H}_6^2$ in Figure \ref{fig:appendix3}, or to a graph obtained from one of these graphs by identifying the neighbors of vertices $b$ and $d$. The latter possibility does not happen if the left dotted edge in Figure \ref{fig:H_6} is present. By the assumption made above, we may thus assume that $b$ and $d$ are not 2-adjacent when at least one of the dotted edges is present; thus the only additional case arising this way is the graph $\widehat{H}_6^*$ in Figure \ref{fig:appendix3}. In either case, we are done by Corollary \ref{cor:appendix}.

\medskip

\emph{Claim 14. If $e_3$ is coupled with $e_{11}$, then $e_3$ is coupled with $e_4$.}
Let $P_{3,11}$ be the 4-chord $3abc11$. Since $H_0\cup P_{3,11}$ is not thick (Lemma \ref{lem:thick unbalanced}), there is a 2-chord joining two vertices in the larger bipartite class of this graph. By using Claim 10 it is easy to see that the only possibility is a 2-chord joining the vertex $c$ with the vertex 4. This gives the claim.

\medskip

We are now ready to complete the proof. We may assume that $e_3$ is an internal edge, and we may assume that $e_3$ is not coupled with $e_4$ (by using Claim 13 and symmetry). By Claim 14, $e_3$ is not coupled with $e_{11}$ and by Claim 10, it is not coupled with $e_7$ or $e_8$. The only possibility remaining is that it is coupled with $e_{12}$. Claim 11 implies that $e_4$ is an internal edge. The same arguments as used for $e_3$ show that $e_4$ is coupled with $e_{11}$. Now, Claim 10 implies that $e_7$ and $e_8$ cannot be coupled with $e_{11}$ or $e_{12}$. Therefore, $e_7$ is coupled with $e_8$. Take $C$ to be the bipartite vertex class of $H_0\cup P_{3,12}\cup P_{4,11}\cup P_{7,8}$ containing the vertices 1, 3, 5, 7. Then $Q_C$ is isomorphic to the graph $\widehat H_7$ in Figure \ref{fig:appendix3}. Corollary \ref{cor:appendix} applies again, and the proof is complete.
\end{proof}

\appendix

\section{Some subcubic graphs and their eigenvalues}

In the appendix we list a collection of graphs and their critical eigenvalues that were used to obtain balance-increasing vertex sets in the proof of our main theorem.

\begin{figure}[htb]
   \centering
   \includegraphics[width=14cm]{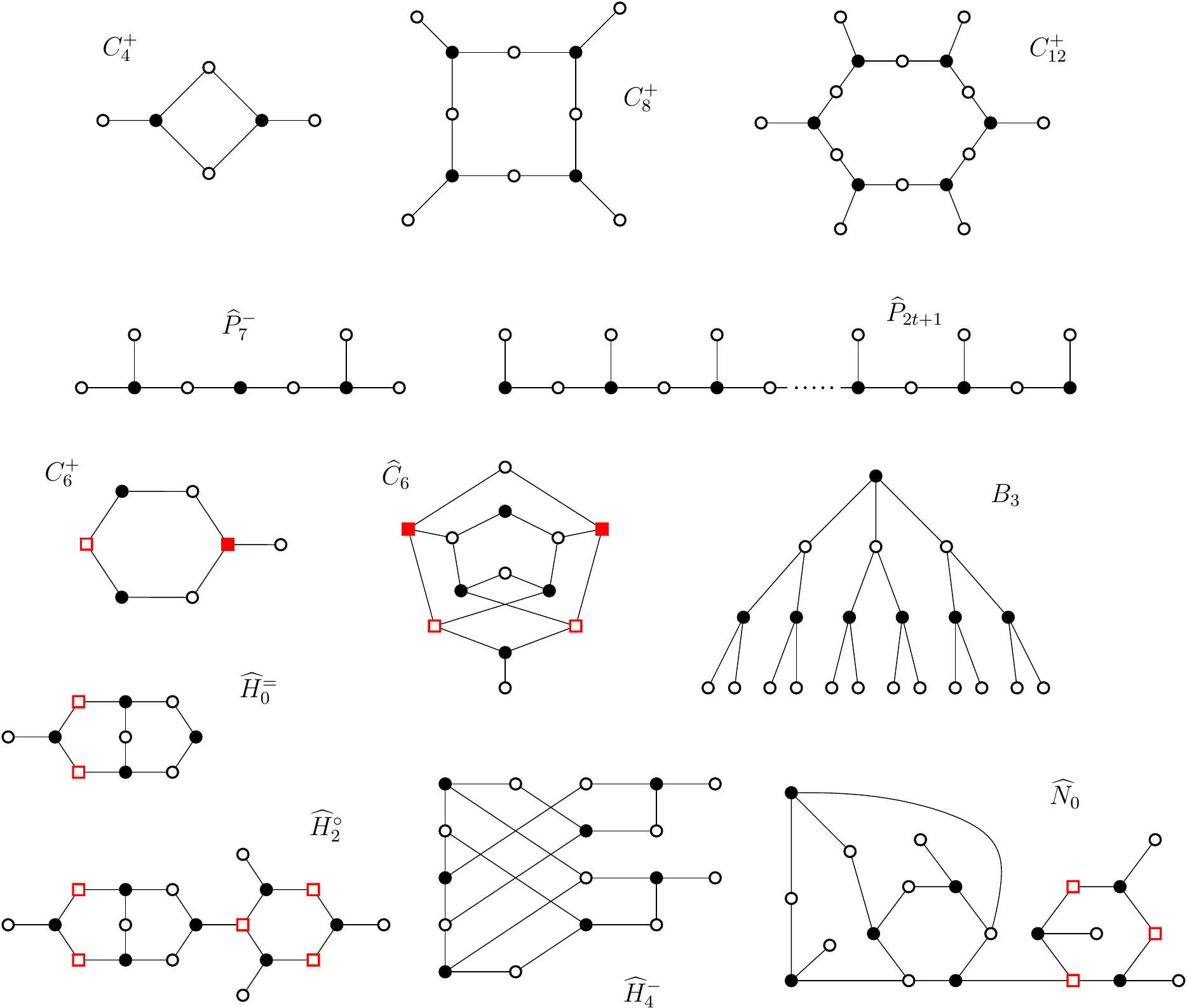}
   \caption{Graphs in Lemma \ref{lem:some graphs}}
   \label{fig:appendix1}
\end{figure}

\begin{figure}[htb]
   \centering
   \includegraphics[width=13cm]{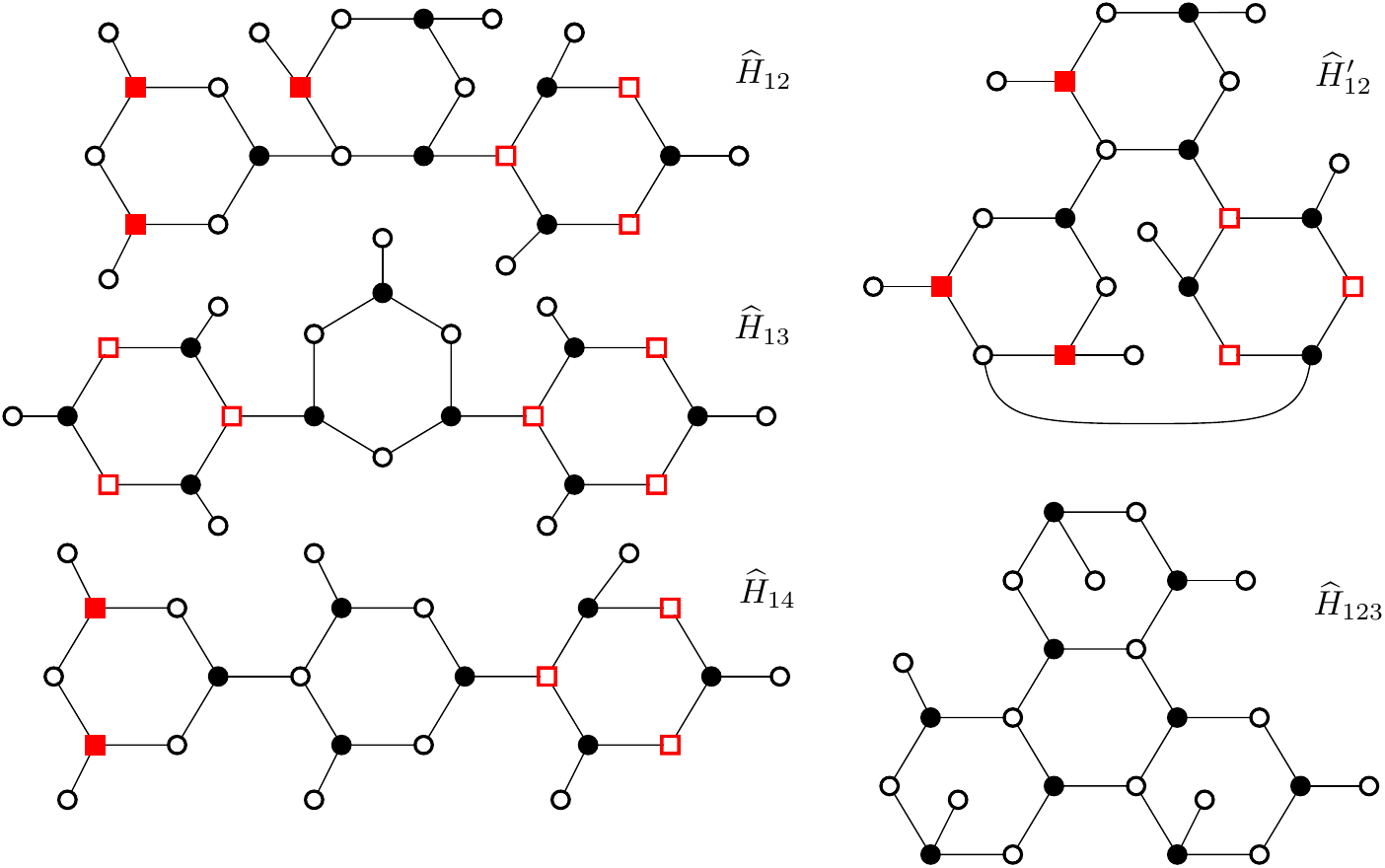}
   \caption{The graphs $\widehat{H}_{12}$, $\widehat{H}_{12}'$, $\widehat{H}_{13}$, $\widehat{H}_{14}$, and $\widehat{H}_{123}$}
   \label{fig:appendix2}
\end{figure}

\begin{lemma}
\label{lem:some graphs}
{\rm (a)}
The graph $C_4^+$ depicted in Fig.~\ref{fig:appendix1} has $\l_2(C_4^+) = 1$.

{\rm (b)}
Let\/ $G$ be one of the graphs $C_6^+$ or $\widehat P_7^-$ depicted in Fig.~\ref{fig:appendix1}. Then $\l_3(G) = 1$.

{\rm (c)}
The graph $C_8^+$ depicted in Fig.~\ref{fig:appendix1} has $\l_4(C_8^+) = 1$.

{\rm (d)}
The graph $C_{12}^+$ depicted in Fig.~\ref{fig:appendix1} has $\l_6(C_{12}^+) = 1$.

{\rm (e)}
The graph $\widehat{C}_6$ depicted in Fig.~\ref{fig:appendix1} has $\l_6(\widehat{C}_6) < 0.91\,$.

{\rm (f)}
The graph $B_3$ depicted in Fig.~\ref{fig:appendix1} has $\l_7(B_3) = 1$.

{\rm (g)} The graphs $\widehat{H}_2^\circ$ and $\widehat{H}_4^-$ depicted in Fig.~\ref{fig:appendix1} have $\l_7(\widehat{H}_2^\circ) = \l_7(\widehat{H}_4^-) = 1$.

{\rm (h)} The graphs $\widehat{H}_0^=$, $\widehat{N}_0$, and $\widehat{H}_5$, depicted in Figs.~\ref{fig:appendix1} and \ref{fig:appendix3} have $\l_4(\widehat{H}_0^=) = 1$, $\l_8(\widehat{N}_0) = 1$, and $\l_8(\widehat{H}_5) < 0.92\,$.

{\rm (i)}
The graphs $\widehat{H}_{12}'$ and $\widehat{H}_{123}$ depicted in Fig.~\ref{fig:appendix2} satisfy $\l_8(\widehat{H}_{12}') = \l_9(\widehat{H}_{12}') = \l_8(\widehat{H}_{123}) = \l_9(\widehat{H}_{123}) = 1$.

{\rm (j)}
The graphs $\widehat{H}_{12}$, $\widehat{H}_{13}$, and $\widehat{H}_{14}$ depicted in Fig.~\ref{fig:appendix2} have
$\l_9(\widehat{H}_{12}) < 0.95$, $\l_9(\widehat{H}_{13}) = 1$, and $\l_9(\widehat{H}_{14}) < 0.96\,$.

{\rm (k)} The graphs $\widehat{H}_6^*$, $\widehat{H}_6^0$, $\widehat{H}_6^1$, $\widehat{H}_6^2$, and $\widehat{H}_7$ depicted in Fig.~\ref{fig:appendix3} have $\l_9(\widehat{H}_6^*) = \l_9(\widehat{H}_6^0) = \l_9(\widehat{H}_6^1) = \l_9(\widehat{H}_6^2) = \l_8(\widehat{H}_6^2) = \l_9(\widehat{H}_7) = 1$.

{\rm (l)}
The graph $\widehat L_{3,3}$ depicted in Fig.~\ref{fig:appendix3} has $\l_{10}(\widehat L_{3,3}) < 0.92\,$.

{\rm (m)}
The graph $\widehat{P}_{2t+1}$ depicted in Fig.~\ref{fig:appendix1} with its horizontal path being of length $2t$ $(t\ge 0)$ has $\l_{t+1}(\widehat{P}_{2t+1}) = 1$.
\end{lemma}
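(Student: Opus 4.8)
The plan is to reduce every one of the assertions (a)--(m) to computing the least eigenvalue of a single explicit small positive semidefinite integer matrix. Each item concerns a bipartite graph $H$ (drawn in Figures \ref{fig:appendix1}--\ref{fig:appendix3}), with the distinguished bipartite class $C$ of size $k=|C|$ equal to the index appearing in the statement, and asserts that $\lambda_k(H)=1$ or that $\lambda_k(H)<c$ for an explicit constant $c<1$. Writing $N(C)$ for the opposite bipartite class and $B$ for the $C\times N(C)$ biadjacency matrix, the adjacency matrix of $H$ has the block form $\left(\begin{smallmatrix}0&B\\ B^{\top}&0\end{smallmatrix}\right)$. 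Because $H$ is bipartite, the positive eigenvalues of $A(H)$ are exactly the positive square roots of the positive eigenvalues of the $k\times k$ positive semidefinite matrix $M:=BB^{\top}$; accounting for the remaining (zero) eigenvalues of $A(H)$ then yields the identity
\[
   \lambda_k\bigl(A(H)\bigr)=\sqrt{\lambda_{\min}(M)} .
\]
Hence $\lambda_k(H)=1$ holds if and only if $\lambda_{\min}(M)=1$, and $\lambda_k(H)<c$ holds if and only if $\lambda_{\min}(M)<c^2$. The matrix $M$ is read straight off the picture: $M_{ii}$ is the degree of the $i$th vertex of $C$ in $H$, and $M_{ij}$ ($i\ne j$) is the number of common neighbors in $H$ of the $i$th and $j$th vertices of $C$.

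Given this, I would proceed as follows. First establish the displayed identity. Then, for each of the finitely many graphs of parts (a)--(l), write down $M$ and compute its characteristic polynomial; in many cases $M$ has the form (a nearly constant diagonal) plus (the adjacency matrix of a small path, cycle, or tree on $k$ vertices), so its spectrum follows from standard path/cycle spectra together with a small correction, and in the remaining cases $M$ is still a small explicit integer matrix whose characteristic polynomial factors easily, for instance via an equitable partition of the associated auxiliary graph. For the items asserting $\lambda_k(H)=1$ it then remains only to verify that $\det(M-I)=0$ and that $M-I\succeq 0$; the latter is immediate from the computed spectrum of $M$, or may be exhibited directly by writing $M-I$ as a Gram matrix together with a vector in its kernel. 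For the five items asserting a strict inequality --- the graphs $\widehat C_6$, $\widehat H_5$, $\widehat H_{12}$, $\widehat H_{14}$, and $\widehat L_{3,3}$ --- one furnishes a rational test vector $z$ with $\|B^{\top}z\|^2<c^2\|z\|^2$, or bounds the least root of the low-degree characteristic polynomial of $M$ by evaluating it at $c^2$ and checking signs; these are the only places where a genuine numerical estimate enters, and it is a small one. Finally, for the infinite family $\widehat P_{2t+1}$ of part (m): here $C$ is the set of even-indexed vertices of the bottom path, and a direct computation shows that $M-I$ equals the signless Laplacian of the path on those $t+1$ vertices; since that path is connected and bipartite, its signless Laplacian has least eigenvalue $0$, so $\lambda_{\min}(M)=1$ and $\lambda_{t+1}(\widehat P_{2t+1})=1$ for every $t\ge 0$, uniformly.

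I expect the main obstacle to be logistical rather than conceptual: there are on the order of two dozen graphs, several of them on twenty to thirty vertices, so one has to write down and diagonalize that many small matrices. Two bookkeeping points deserve care. First, Corollary \ref{cor:appendix} is applied not only to the graphs exactly as drawn but also to induced subgraphs obtained by deleting some degree-$1$ vertices (this occurs when a vertex of $G$ has degree $2$ rather than $3$); deleting such a vertex merely erases a column of $B$, hence decreases $M=BB^{\top}$ in the Loewner order and so cannot increase $\lambda_{\min}(M)$, so every stated upper bound is automatically inherited by all these variants. Second, one must match the vertex labels used in the figures with the sets $C$ named in the proof of Lemma \ref{lem:improve imbalance}, but this is purely mechanical.
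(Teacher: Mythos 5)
Your reduction is sound and takes a genuinely different route from the paper. The identity $\lambda_{|C|}(H)=\sqrt{\lambda_{\min}(BB^{\top})}$ is correct (when the smallest singular value of $B$ is positive it equals $\lambda_{|C|}(H)$; when $\mathrm{rank}\,B<|C|$ both sides are $0$), and it applies to every graph in the list because in each case $C$ is a full bipartition class of $H=Q_C$ whose size equals the index in the statement. The paper instead verifies (a)--(l) by computer and, in the discussion following the lemma, gives hand-checkable certificates consisting of explicit eigenvectors for the eigenvalue $1$ together with interlacing after deleting designated ``square'' vertices; for (m) it deletes the even path vertices and interlaces against the resulting perfect matching. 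Your approach buys a uniform and smaller certificate: a $k\times k$ positive semidefinite integer matrix $M$ (degrees on the diagonal, common-neighbor counts off it) with $M-I\succeq 0$ and $\det(M-I)=0$, and for (m) the pleasant identification of $M-I$ with the signless Laplacian of $P_{t+1}$, whose least eigenvalue is $0$ because the path is connected and bipartite --- a complete and correct alternative to the paper's interlacing argument, and the Loewner-monotonicity remark correctly disposes of the induced subgraphs with missing pendants. Two caveats. First, for parts (a)--(l) you have only described the computation, not performed it; since this lemma \emph{is} the finite-verification step of the paper (which itself defers to a computer), your write-up is a framework plus a proof of (m), and the roughly two dozen matrices $M$ still have to be written down and checked before it counts as a proof of (a)--(l). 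Second, three of the assertions ($\lambda_8(\widehat{H}_{12}')=1$, $\lambda_8(\widehat{H}_{123})=1$, $\lambda_8(\widehat{H}_6^2)=1$) claim $1=\lambda_k=\lambda_{k-1}$, i.e.\ that $1$ is an eigenvalue of $M$ of multiplicity at least two; your stated certificate ($\det(M-I)=0$ plus $M-I\succeq0$) only yields $\lambda_k=1$, so for those graphs you must additionally verify $\mathrm{rank}(M-I)\le k-2$, which the full characteristic-polynomial computation you mention would of course supply.
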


\begin{figure}[htb]
   \centering
   \includegraphics[width=14cm]{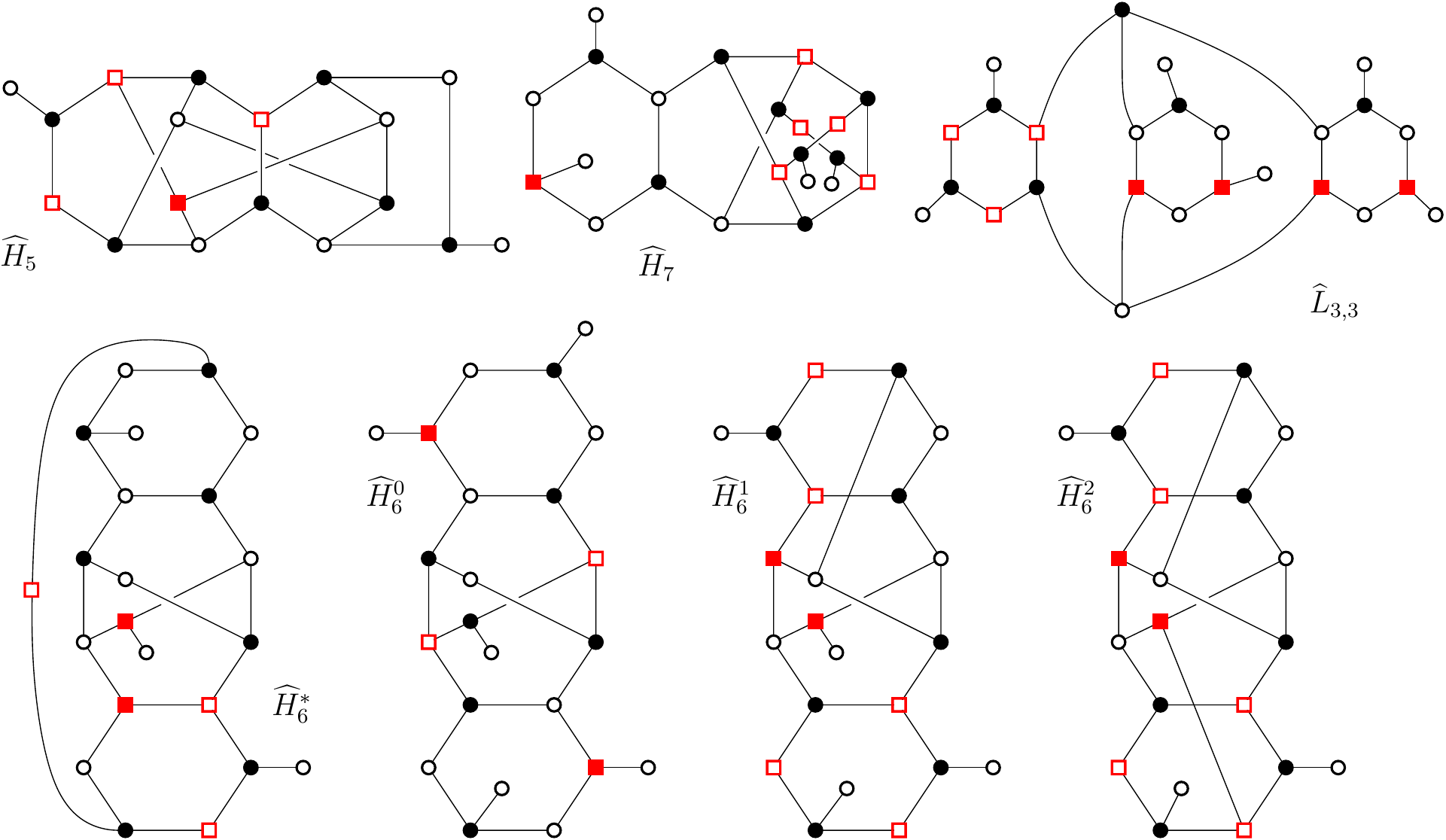}
   \caption{The graphs $\widehat{H}_5$, $\widehat{H}_7$, $\widehat{L}_{3,3}$, $\widehat{H}_6^*$, $\widehat{H}_6^0$, $\widehat{H}_6^1$, and $\widehat{H}_6^2$}
   \label{fig:appendix3}
\end{figure}

\begin{proof}
Claims (a)--(l) were checked by computer. The only graph that needs the proof is $\widehat{P}_{2t+1}$.
Let $v_1v_2\dots v_{2t}v_{2t+1}$ be the horizontal path of length $2t$ in $\widehat{P}_{2t+1}$, and let $C=\{v_2,v_4,\dots,v_{2t}\}$. The subgraph $R=\widehat{P}_{2t+1}-C$ is a matching consisting of $t+1$ disjoint edges. The interlacing theorem shows that $1=\l_{t+1}(R)\le \l_{t+1}(\widehat{P}_{2t+1}) \le \l_1(R) = 1$. This completes the proof.
\end{proof}

Part of the proof of Lemma \ref{lem:some graphs} is based on computer computation. For a reader that may be skeptical about a proof relying on computer evidence, we show in the sequel how to obtain a self-contained proof. We will provide a sketch for a direct proof for all of the cases (in addition to the proof for $\widehat{P}_{2t+1}$ given above) that will suffice to support Corollary \ref{cor:appendix} which is used throughout in Section \ref{sect:improving imbalance}.

Let $G$ be a graph in one of Figs.~\ref{fig:appendix1}--\ref{fig:appendix3}. Let $C$ be the set of vertices of $G$ that are drawn as full circles or full squares. Our goal is to show that $\l_{|C|}(G)\le 1$. Observe that in each case, $G-C$ consists of isolated vertices, thus the Interlacing Theorem implies that $\l_{|C|+1}(G)\le 0$. Thus it suffices to provide evidence that there is an eigenvalue $\l$, where $0<\l\le1$.

For graphs $C_4^+,C_8^+,C_{12}^+,\widehat{P}_7^-$, and $B_3$ we can confirm this by describing an eigenvector for eigenvalue $\l=1$. For $C_4^+,C_8^+,C_{12}^+$, the eigenvector has value $0$ on vertices of degree 2 and value $\pm1$ on other vertices, where each vertex of degree 1 and its neighbor have the same value, and the values $+1$ and $-1$ alternate around the cycle. For $\widehat{P}_7^-$, the vector has values 1 on vertices of degree 1 and 3, value $-1$ on vertices of degree 2 that are adjacent to the degree-3 vertices, and value $-2$ on the vertex in the middle. For $B_3$, the eigenvector has value 3 at the top vertex, values $1$ on adjacent vertices, and value $-1$ at all other vertices.

In the case of $C_6^+$ we can confirm that $\l_3(C_6^+)\le1$ by using the interlacing theorem when we remove the vertex of degree 3 and the vertex that is opposite to it on the 6-cycle. The vertices removed in this case and in the cases treated below are shown as squares. In the case of the graph $\widehat{C}_6$, we remove the two vertices on the left and two vertices on the right of the drawing. The resulting subgraph has one component isomorphic to $C_6$, which has $\l_2(C_6)=1$. Again, the Interlacing Theorem applies. The same happens for graphs $\widehat{H}_0^=$ (where we remove the two square vertices on the left of the drawing) and $\widehat{H}_2^\circ$ (where we remove five vertices). For the graph $\widehat{H}_4^-$ we provide an eigenvector for eigenvalue $1$ in Figure \ref{fig:appendix4}(a). For $\widehat{N}_{0}$ we first remove the three square vertices and then find an eigenvector for eigenvalue 1 for the remaining subgraph (cf.\ Figure \ref{fig:appendix4}(b)).

This exhausts all graphs in Figure \ref{fig:appendix1} and we proceed with those in Figure \ref{fig:appendix2}.

\begin{figure}[htb]
   \centering
   \includegraphics[width=13.5cm]{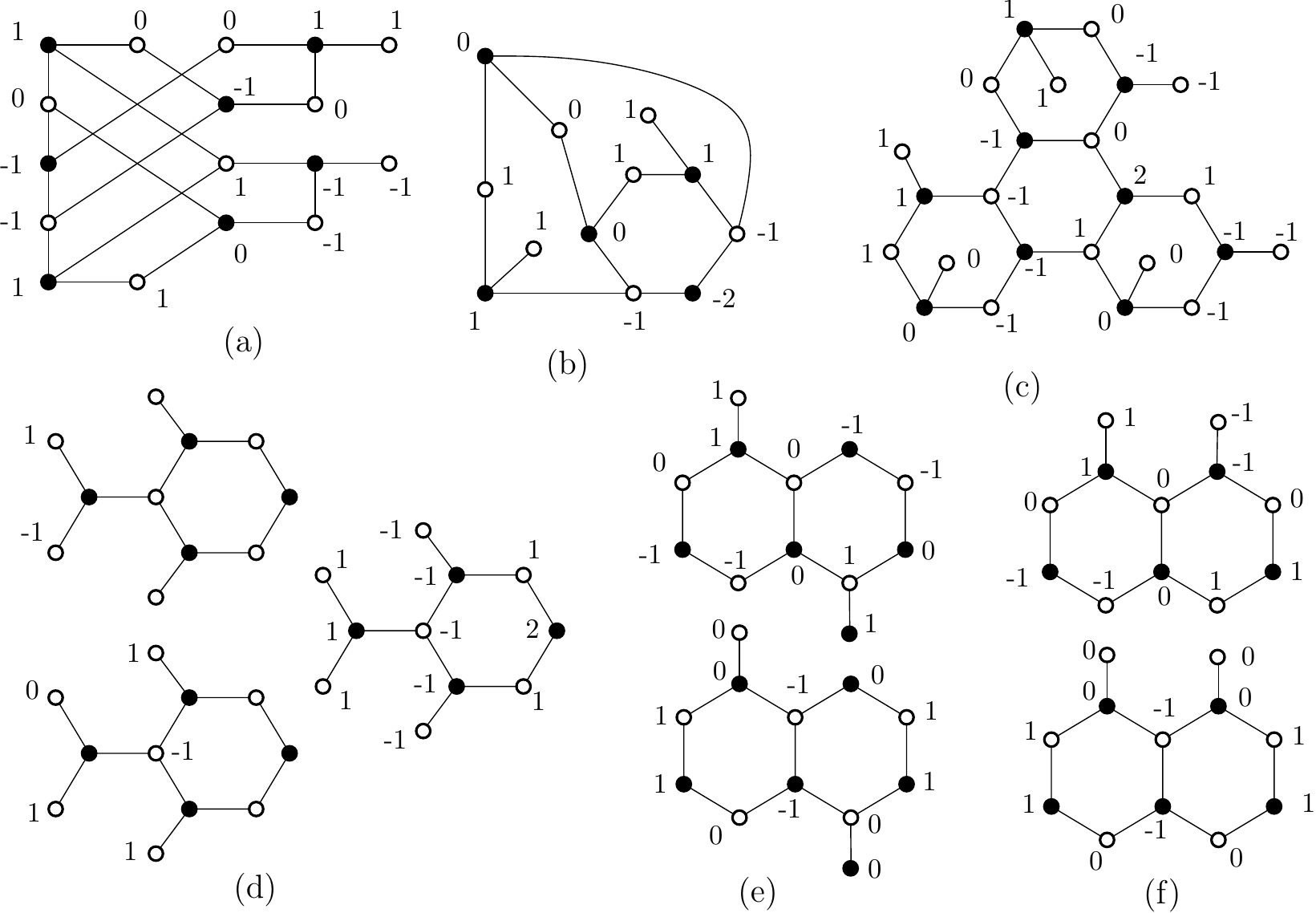}
   \caption{Some graphs and their eigenvectors}
   \label{fig:appendix4}
\end{figure}

For the graphs $\widehat{H}_{12}$ and $\widehat{H}_{12}'$, we remove six square vertices, and
are left with a copy of the graph $\widehat{P}_7^-$ as the non-trivial component. Since $\l_3(\widehat{P}_7^-)=1$, the Interlacing Theorem applies. For the graph $\widehat{H}_{13}$, we can remove its six square vertices, being left with a copy of the graph $C_6^+$ as the only non-trivial component. Again, the Interlacing Theorem applies.

For the graph $\widehat{H}_{14}$, we remove five square vertices. The resulting non-trivial component $Q$ has $\l_4(Q)=1$ and $\l_5(Q)=\l_6(Q)=0$. The evidence for this is shown by three eigenvectors in Figure \ref{fig:appendix4}(d), where the unfilled values for the eigenvectors of 0 are assumed to be 0. (Note that $\l_7(Q)=0$ as well, but this evidence is not needed for our proof.) Finally, the graph $\widehat{H}_{123}$ has eigenvalue 1; its eigenvector is shown in Figure \ref{fig:appendix4}(c).

It remains to treat the graphs in Figure \ref{fig:appendix3}. For the graph $\widehat{H}_5$, we remove the four square vertices. The remaining nontrivial component $Q$ consists of two hexagons sharing an edge plus two additional edges. Figure \ref{fig:appendix4}(e) shows two independent eigenvectors for eigenvalue 1 of $Q$ which implies that $\l_4(Q)\le 1$, and the Interlacing Theorem can be applied.

For the graph $\widehat{H}_7$, we remove its six square vertices. The remaining nontrivial component $Q$ consists of a path $v_1v_2\dots v_6$ with added pendant edges at vertices $v_2,\dots,v_5$. This graph has characteristic polynomial
$$
    \phi(\l) = \l^{10} - 9\l^8 + 24\l^6 - 20\l^4 + 4\l^2.
$$
Note that $\phi(1)=\phi(0)=0$ and that $\phi'(1) > 0$. Basic calculus shows that $\l_5(Q)=0$ and $\l_i(Q)\le1$ for $i=3,4$.

For the graph $\widehat{L}_{3,3}$, the removal of two square vertices from two of the three central hexagons and removal of three square vertices from the third hexagon give a non-trivial component that is isomorphic to $\widehat{P}_7^-$, and we are done as in some of the previous cases. The proof is also easy for $\widehat{H}_6^1$, and $\widehat{H}_6^2$. The removal of indicated seven square vertices leaves only one nontrivial component, which is isomorphic to $C_6$, whose second eigenvalue is 1.

From $\widehat{H}_6^*$ we remove five square vertices, being left with a non-trivial component $Q$ consisting of two adjacent hexagons plus two edges. Figure \ref{fig:appendix4}(f) contains evidence that $Q$ has eigenvalue 1 of multiplicity at least 2, which implies that $\l_4(Q)\le 1$, and interlacing arguments apply.

Finally, in the case of $\widehat{H}_6^0$, removal of four square vertices leaves one non-trivial component, $Q$, which is isomorphic to the graph obtained from the path of length 10 to which we add two pendant edges at each end. The characteristic polynomial $\phi(\l)$ of $Q$ is easily computed:
$$
    \phi(\l) = \l^{15} - 14\l^{13} + 76\l^{11} - 200\l^9 + 259\l^7 - 146\l^5 + 24\l^3.
$$
Then $\phi(1)=0$ and $\phi'(1) = 24 > 0$. Thus, one of $\l_1(Q), \l_3(Q), \l_5(Q), \l_7(Q)$ is equal to 1. However, $\l_7(Q)=0$, so we have that $\l_5(Q)\le1$, and interlacing can be used again. This exhausts all graphs in Figures \ref{fig:appendix1}--\ref{fig:appendix3}.

\subsection*{Acknowledgement}
The author is grateful to Tom Boothby and to Krystal Guo for an independent verification of eigenvalue computations for graphs that appear in the appendix. Computations were done by using both Maple 16
({\tt http://www.maplesoft.com/products/Maple/}) and Sage (System for Algebra and Geometry Experimentation, {\tt http://www.sagemath.org/}).


\end{document}